\g@addto@macro\bfseries{\boldmath}
\pgfplotsset{compat=1.15}
\newcommand{\f}{\frac}
\newtheorem{thm}{Theorem}[section]
\newtheorem{prop}[thm]{Proposition}
\newtheorem{lemma}[thm]{Lemma}
\newtheorem{claim}[thm]{Claim}
\theoremstyle{definition}
\newtheorem{conj}[thm]{Conjecture}
\newtheorem{question}[thm]{Question}
\begin{document}
\author{Harry Metrebian}
\address{Department of Pure Mathematics and Mathematical Statistics, University of Cambridge, UK}
\email{rhkbm2@cam.ac.uk}

\title[Odd colouring on the torus]{Odd colouring on the torus}

\begin{abstract} 
A proper vertex-colouring of a simple graph $G$ is said to be odd if, for every non-isolated vertex $v$ of $G$, some colour appears an odd number of times in the neighbourhood of $v$. We show that if $G$ embeds in the torus, then it admits a proper odd vertex-colouring with at most $9$ colours.
\end{abstract}

\maketitle

\section{Introduction}

The concept of odd colouring was recently introduced by Petru\v{s}evski and \v{S}krekovski \cite{ps21}. A \emph{proper odd vertex-colouring} of a simple graph $G$, often referred to as an \emph{odd colouring} for short, is a proper colouring $c$ of $V(G)$ with the property that, for every non-isolated vertex $v$ of $G$, there exists a colour $i$ such that $|c^{-1}(i) \cap N(v)|$ is odd: in other words, for every $v$, there is some colour that appears an odd number of times in the neighbourhood of $v$. The \emph{odd chromatic number} of a graph $G$, denoted $\chi_o(G)$, is the smallest number $k$ such that $G$ admits an odd colouring using $k$ colours.

Petru\v{s}evski and \v{S}krekovski \cite{ps21} used the discharging method to prove that every planar graph $G$ satisfies $\chi_o(G) \leq 9$, and conjectured that in fact $\chi_o(G) \leq 5$ for all planar graphs. Note that if true, this conjecture is the best possible, since $C_5$ requires $5$ colours. Petr and Portier \cite{pp22} improved the upper bound to $\chi_o(G) \leq 8$, and Fabrici et al.\ \cite{fabrici22} showed independently that in fact every planar graph has a \emph{conflict-free colouring} with at most $8$ colours, meaning that there is a colouring such that for every $v$ some colour appears exactly once in its neighbourhood. Clearly any conflict-free colouring is an odd colouring.

A \emph{toroidal graph} is a simple graph that can be embedded on the surface of a torus. We will demonstrate an upper bound on $\chi_o(G)$ for toroidal graphs:

\begin{thm}\label{mainthm}
Let $G$ be a toroidal graph. Then $\chi_o(G) \leq 9$.
\end{thm}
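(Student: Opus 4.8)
The plan is to use the discharging method, analogous to the approach Petruševski and Škrekovski used for planar graphs, but adapted to the torus. I would first set up a minimal counterexample: suppose $G$ is a toroidal graph with $\chi_o(G) > 9$, chosen with the fewest vertices. I would embed $G$ in the torus and may assume the embedding is cellular (every face is a disc), since otherwise one could cut along a non-contractible curve and reduce to a planar or simpler case. The key topological input is the Euler formula for the torus, which gives $V - E + F = 0$ rather than $2$. This changes the "charge budget": assigning each vertex $v$ charge $d(v) - 4$ and each face $f$ charge $\ell(f) - 4$ (where $\ell(f)$ is the face length) makes the total charge $\sum_v (d(v)-4) + \sum_f (\ell(f)-4) = 2E - 4V + 2E - 4F = -4(V - E + F) = 0$. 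So on the torus the total charge is exactly $0$, as opposed to the strictly negative total on the sphere.

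\medskip

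First I would establish reducible configurations: structural features that cannot appear in a minimal counterexample. The basic tool is that if some vertex $v$ has small degree or a controlled neighbourhood, one can delete it (or a small set around it), apply minimality to $9$-odd-colour the rest, and then extend the colouring to $v$. Extending requires two things simultaneously: a colour for $v$ distinct from its neighbours (properness), and the guarantee that every neighbour $u$ of $v$ still sees some colour an odd number of times (the odd condition for $u$ may be disrupted by recolouring or by the choice of $c(v)$). The standard reducible configurations will be low-degree vertices, adjacent low-degree vertices, and vertices surrounded by triangles; I would catalogue enough of these so that their \emph{absence} forces the discharging to fail.

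\medskip

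Next I would run the discharging phase. Since the total charge is $0$ on the torus rather than negative, I cannot simply derive a contradiction from "total charge negative but every final charge nonnegative"; instead the discharging rules must be designed so that, in the \emph{absence} of all reducible configurations, every vertex and face ends with nonnegative final charge \emph{and at least one element ends strictly positive}, contradicting a total of exactly $0$. This is the main obstacle: the torus removes the slack that the sphere's $+12$ total charge provides, so the rules have to be tuned tightly, and one must verify that a configuration giving strictly positive excess is genuinely unavoidable. I would design discharging rules sending charge from high-degree vertices and long faces toward the deficient elements (degree-$3$ vertices and triangular faces carry the negative charge), then check face-by-face and vertex-by-vertex that reducibility rules out every way of leaving an element with negative final charge.

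\medskip

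Finally, I would assemble the contradiction: the reducible configurations show a minimal counterexample avoids all of them, the discharging then forces nonnegative charge everywhere with strict positivity somewhere, contradicting the zero total. Hence no minimal counterexample exists and every toroidal graph satisfies $\chi_o(G) \le 9$. I expect the delicate points to be (i) handling non-cellular or low-genus-reducible embeddings cleanly so the Euler count applies, and (ii) the simultaneous properness-plus-odd extension arguments, where recolouring one neighbour to fix its odd condition can break another's, so a careful local counting or a small case analysis on the colours available at $v$ will be needed.
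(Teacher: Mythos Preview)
Your proposal has a genuine gap: on the torus, the discharging argument by itself cannot close. You correctly note that the total charge is exactly $0$ and that you therefore need ``nonnegative everywhere plus strictly positive somewhere''; but there is a whole family of toroidal graphs for which no discharging scheme can produce that strict positivity, namely the $6$-regular triangulations. In such a graph every vertex has degree $6$ and every face is a triangle, so with any of the standard charge assignments (including yours, $d(v)-4$ and $\ell(f)-4$) the graph is completely balanced: whatever local rules you set up, the vertex-transitive symmetry forces every element to end at charge exactly $0$. There is nothing structurally ``unavoidable'' in a $6$-regular triangulation that yields excess charge, so your step ``verify that a configuration giving strictly positive excess is genuinely unavoidable'' fails outright for these graphs. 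These are not pathological edge cases one can wave away with a cellular-embedding assumption; they are bona fide simple toroidal graphs (e.g.\ $K_7$ itself) that a minimal counterexample could in principle be.

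The paper's proof acknowledges this explicitly: after establishing $\delta(G)\ge 5$ by the standard extension lemmas, it uses discharging (with charges $d(v)-6$ and $2d(f)-6$) only to eliminate the case $\delta(G)=5$, and then treats $\delta(G)=6$ --- i.e.\ $G$ a $6$-regular triangulation --- by an entirely separate, constructive argument. That argument invokes Altshuler's classification of $6$-regular torus triangulations as the graphs $T(m,n,t)$ and then exhibits, case by case on $m$ and on residues of $m,n$ modulo $3$, an explicit proper odd $9$-colouring of each $T(m,n,t)$. This direct-construction half of the proof is substantial and has no counterpart in your outline; without it (or some replacement for it) the argument does not go through.
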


Note that at least $7$ colours are sometimes required, since $K_7$ can be embedded in the torus.

Our proof mainly builds on the discharging techniques of Petru\v{s}evski and \v{S}krekovski \cite{ps21}. Our application of the discharging method is rather more sensitive and leaves a special case that must be dealt with separately.

\section{Proof outline and preliminaries}

For convenience, we will refer to a proper odd colouring with at most $9$ colours as a \emph{nice colouring}. From now on, we assume that there exists a toroidal graph which does not have a nice colouring, and $G = (V,E)$ will always denote a minimal such graph. Clearly $G$ is connected.

In our proof, we will often be faced with a situation where we have a colouring $c$ of $G\setminus\{v\}$, and we would like to find a colour that we can use at $v$ to extend $c$ to a nice colouring of $G$. There are two reasons why we might not be able to use a particular colour at $v$. Firstly, for each $w \in N(v)$, we cannot use $c(w)$ at $v$, since the resulting colouring would not be proper. We refer to such colours as \emph{forbidden at $v$ by properness}. Secondly, for each $w \in N(v)$, there can be at most one colour $b(w)$ such that using $b(w)$ at $v$ would result in each colour being used an even number of times in $N(w)$. We refer to such colours as \emph{forbidden at $v$ by oddness}. There are therefore at most $2d(v)$ colours forbidden at $v$ in total. 

We will say that a colouring is \emph{proper at $v$} if $v$ is coloured differently from all its neighbours, and \emph{odd at $v$} to mean that some colour appears an odd number of times in $N(v)$. We will also use this terminology for partial colourings as long as they are defined at all the vertices of $N(v)$. Note that if $v$ has odd degree, then any colouring is automatically odd at $v$.

Note that the torus has Euler characteristic $0$, so for any toroidal graph we have a version of Euler's formula: $$|V(G)| - |E(G)| + |F(G)| \geq 0.$$ Since $3|F(G)| \leq 2|E(G)|$, we have that $|E(G)| \leq 3|V(G)|$, and hence $\delta(G) \leq 6$. In order for equality to hold, $G$ must be $6$-regular and every face of $G$ must have 3 edges. In addition, equality must hold in Euler's formula, so every face of $G$ must be homeomorphic to a disk (see  \cite{mohar}, Chapter 3). In other words, $\delta(G) = 6$ iff $G$ is a $6$-regular triangulation of the torus.

In fact, for our minimal $G$, we must have $\delta(G) \geq 5$. This was proved by Petru\v{s}evski and \v{S}krekovski \cite{ps21} in the context of planar graphs, but still holds for toroidal graphs. For completeness, we reproduce their proof here.

\begin{claim}\label{mindeg}
$G$ has minimum degree $\delta(G) \geq 5$.
\end{claim}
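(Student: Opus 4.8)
The plan is to argue by the standard minimal-counterexample deletion technique. Suppose for contradiction that $G$ has a vertex $v$ with $d(v) \le 4$, and set $H = G \setminus \{v\}$. Since $H$ is a subgraph of a toroidal graph it is itself toroidal, and it is strictly smaller than $G$, so by minimality $H$ admits a nice colouring $c$. The whole task is then to extend $c$ to $v$, possibly after a local modification, so as to obtain a nice colouring of $G$, contradicting the choice of $G$.

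The first step is the counting bound already recorded in the preliminaries: every neighbour of $v$ forbids at most one colour by properness and at most one by oddness, so at most $2d(v) \le 8$ of the $9$ colours are forbidden at $v$. Hence there is a colour $\alpha$ that is forbidden neither by properness nor by oddness; assigning $c(v) = \alpha$ keeps the colouring proper everywhere, keeps it odd at every vertex of $N(v)$, and leaves every vertex outside $N(v) \cup \{v\}$ untouched. The only condition that can still fail is oddness at $v$ itself. Since the colours on $N(v)$ are already fixed by $c$, this condition does not depend on $\alpha$: if $d(v)$ is odd then $v$ is automatically odd and we are done immediately, so the remaining work concerns the case $d(v) \in \{2,4\}$.

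So assume $d(v)$ is even. If some colour already appears an odd number of times on $N(v)$, the colour $\alpha$ above completes a nice colouring and we are done. Otherwise every colour occurs an even number of times on $N(v)$, and I would repair this by recolouring a single neighbour $w \in N(v)$. The key observation is that if $w$ currently has colour $a$ and we recolour it with any colour $\gamma \ne a$, then the number of occurrences of $a$ in $N(v)$ drops from an even value to an odd one, so $v$ becomes odd regardless of the precise value of $\gamma$. It therefore suffices to choose $\gamma$ legally: $\gamma$ must differ from $a$, must avoid the colours of the at most $d(w)$ neighbours of $w$ for properness, and must avoid the at most one colour per neighbour of $w$ that would destroy oddness there. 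This amounts to at most $2d(w)$ forbidden colours, so a suitable $\gamma$ exists whenever $d(w) \le 4$; one then assigns $\alpha$ to $v$ as before, noting that oddness at $w$ is preserved because $\alpha$ is not forbidden by oddness at $w$ and that properness at $v$ is preserved since $\gamma \ne \alpha$ is already enforced ($v$ being a neighbour of $w$).

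The main obstacle is precisely this last case: guaranteeing the repair when $v$ has even degree and an all-even neighbourhood but every neighbour $w$ has large degree, so that the crude bound $2d(w)$ exceeds $8$. Overcoming it will require a more careful choice — for instance selecting the neighbour to recolour so that its properness and oddness constraints on $\gamma$ genuinely overlap, or coordinating the choice of $\alpha$ at $v$ with the choice of $\gamma$ at $w$ — and this is the step I expect to demand the most attention.
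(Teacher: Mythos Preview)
Your argument is incomplete, and you say so yourself: in the case $d(v)\in\{2,4\}$ with every colour appearing an even number of times on $N(v)$ and every neighbour of $v$ having degree at least $5$, your repair step requires a colour for $w$ avoiding roughly $2d(w)\ge 10$ constraints, and you offer only the hope that overlaps among these constraints might save a colour. That is not a proof, and there is no evident reason it should work in general.

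The paper avoids this obstacle by changing the reduction graph rather than by recolouring afterwards. For $d(v)\in\{2,4\}$, fix any neighbour $w$ of $v$ and form $G'$ from $G\setminus\{v\}$ by adding every missing edge from $w$ to the remaining vertices of $N_G(v)$; these new edges can be drawn through the region vacated by $v$, so $G'$ is still toroidal and by minimality has a nice colouring $c$. Transfer $c$ to $G\setminus\{v\}$: there are still at most $2d(v)\le 8$ colours forbidden at $v$, so some colour is available. The payoff of the extra edges is that in $G'$ the vertex $w$ is adjacent to every other member of $N_G(v)$, so $c(w)$ is distinct from all their colours and hence appears exactly once in $N_G(v)$. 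Oddness at $v$ is therefore automatic, with no recolouring and no dependence on the degrees of the neighbours.
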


\begin{proof}
Suppose that $G$ contains a vertex $v$ of degree $1$ or $3$. Let $c$ be a nice colouring of $G \setminus \{v\}$, which exists by minimality. We would like to extend $c$ to a nice colouring of $G$. There are now at most $3$ colours forbidden at $v$ by properness and at most $3$ forbidden by oddness, so there is some colour left over that can be used at $v$. Since $v$ has odd degree, the resulting colouring is odd at $v$ and therefore nice, which is a contradiction.

Now suppose instead that $G$ contains a vertex $v$ of degree $2$ or $4$. Let $w$ be an arbitrary neighbour of $v$, and let $G'$ be the graph constructed from $G$ by removing $v$ and then joining $w$ to every vertex in $N_G(v)$ to which it is not already adjacent. The graph $G'$ is toroidal, and so by minimality it has a nice colouring $c$. Now we return to $G$ and colour the vertices of $G\setminus\{v\}$ according to $c$. There are at most $8$ colours forbidden at $v$, leaving a colour that can be used at $v$. The colour $c(w)$ appears exactly once in $N(v)$ by construction, so the resulting colouring is odd at $v$. We therefore have a nice colouring of $G$, which is again a contradiction.
\end{proof}

The rest of the proof will be structured as follows. We first use the discharging method and the minimality of $G$ to show that $G$ cannot contain a vertex of degree $5$. This leaves the case where $G$ is a $6$-regular triangulation. We then use the classification of $6$-regular triangulations of the torus by Altshuler \cite{altshuler} to show that every such triangulation admits a nice colouring, finishing the proof.

As is standard, we will refer to a vertex of degree $d$ as a $d$-vertex, and a vertex of degree at least $d$ as a $d^+$-vertex. The boundary of a face $f$ need not be connected, but it can be considered as a disjoint union of closed walks; we define the \emph{size} of $f$, denoted $d(f)$, to be the total number of vertices appearing in this union of walks, counting with multiplicity. For example, a face whose boundary is a $k$-cycle has size $k$. We call a face of size $k$ a $k$-face, and a face of size at least $k$ a $k^+$-face.

\section{Discharging method}\label{discharging}

In this section, we will deal with the case $\delta(G) = 5$. We will use the fact that the torus is locally homeomorphic to the plane, and therefore we will draw subgraphs of $G$ as if they were on the plane, although this may not always faithfully represent the embedding in the torus. However, the order of the edges around each vertex will be unambiguous and represented correctly.

Throughout the proof, there will be some occasions on which two vertices that are given different names could in fact be the same, or where two named vertices which are not defined to be adjacent could in fact be adjacent. However, this will never affect our arguments.

First, we will need a simple observation made by Petru\v{s}evski and \v{S}krekovski \cite{ps21}.

\begin{claim}\label{5neighbour}
Suppose $v \in G$ is a $5$-vertex. Then $v$ has at most one neighbour of odd degree.
\end{claim}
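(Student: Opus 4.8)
The plan is to argue by contradiction using the minimality of $G$ together with the refined forbidden-colour count introduced above. Suppose, for contradiction, that the $5$-vertex $v$ has two neighbours $w_1$ and $w_2$, both of odd degree. By minimality, $G \setminus \{v\}$ admits a nice colouring $c$, and it suffices to show that $c$ can be extended to a nice colouring of $G$ by choosing an appropriate colour for $v$; this would contradict the assumption that $G$ has no nice colouring.

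The key observation is that a neighbour of odd degree forbids no colour at $v$ by oddness. Indeed, for $w \in N(v)$ the colour $b(w)$ forbidden by oddness is the unique colour which, when placed at $v$, would make every colour appear an even number of times in $N(w)$. Since $v \in N(w)$, this would force $d(w) = |N(w)|$ to be even, being a sum of even multiplicities. Hence if $d(w)$ is odd no such colour exists, and $w$ contributes nothing to the oddness count.

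Applying this to $w_1$ and $w_2$, at most three of the five neighbours of $v$ (those of even degree) can forbid a colour by oddness. Thus there are at most five colours forbidden by properness and at most three forbidden by oddness, giving at most eight forbidden colours in total. Since nine colours are available, some colour remains free; we place it at $v$. The resulting colouring is proper at $v$, it is odd at every even-degree neighbour (we avoided each $b(w)$), it is automatically odd at $w_1$, $w_2$ and at every odd-degree neighbour, and it is odd at $v$ itself because $d(v) = 5$ is odd. This yields a nice colouring of $G$, the desired contradiction.

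There is essentially no hard step here beyond organising the count carefully; the only point requiring care is the observation that odd-degree neighbours drop out of the oddness budget, which is exactly what brings the total down from the naive bound of $2d(v) = 10$ to $8$, and so below the available nine colours.
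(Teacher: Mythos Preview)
Your proof is correct and follows essentially the same approach as the paper's: argue by contradiction, take a nice colouring of $G\setminus\{v\}$, observe that the two odd-degree neighbours forbid nothing by oddness so at most $5+3=8$ colours are forbidden at $v$, and extend using a ninth colour (oddness at $v$ being automatic since $d(v)=5$). You simply spell out in more detail why odd-degree neighbours contribute nothing to the oddness count and why the extended colouring is nice everywhere.
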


\begin{proof}
Suppose that $v$ has two neighbours $x$ and $y$ of odd degree. Let $c$ be a nice colouring of $G\setminus\{v\}$; we would like to extend $c$ to $v$. There are at most $5$ colours forbidden at $v$ by properness, and since $x$ and $y$ have odd degree, they cannot forbid colours at $v$ by oddness, so there are at most $3$ colours forbidden by oddness. This means there is a colour left over that can be used at $v$, and since $v$ has odd degree, the resulting colouring is nice, which is a contradiction.
\end{proof}

We will now introduce our discharging rules. As is standard, we begin by assigning to every vertex $v \in G$ a charge $d(v)-6$ and to every face $f$ a charge $2d(f)-6$. By Euler's formula, the total charge over all vertices and faces of the graph is at most $0$.

The rules are as follows. If a vertex appears with multiplicity greater than $1$ in the closed walk around the boundary of a face, we consider each appearance to be a separate vertex when applying the rules.
\begin{enumerate}[label={(R\arabic*)},itemsep=5pt]
    \item Every $5^+$-face sends charge $1.1$ to each incident $5$-vertex.
    \item Every $4$-face sends charge $1$ to each incident $5$-vertex, unless its incident vertices are, in order, two adjacent $5$-vertices and two adjacent $6^+$-vertices, in which case it sends charge $\f{3}{4}$ to each $5$-vertex.
    \item If $u$ and $v$ are $6^+$-vertices on a $4^+$-face $f$ that are adjacent along an edge of $f$ and also both incident to a $3$-face $uvw$, and $w$ is a $5$-vertex, then $f$ sends charge $\f{1}{2}$ to $w$.
    \item Suppose $v$ is a $7^+$-vertex with at least one neighbouring $5$-vertex. Let a \emph{block} be a maximal set of $5$-vertices in $N(v)$ that appear consecutively in order around $v$. Now $v$ distributes its charge evenly between the blocks, and within each block the charge is distributed evenly between the vertices. For example, an $8$-vertex with $4$ neighbouring $5$-vertices in blocks of size $2$, $1$ and $1$ sends charge $\f{2}{3}$ to each vertex in a block of size $1$, and $\f{1}{3}$ to each of the vertices in the block of size $2$.
\end{enumerate}

\begin{figure}[ht]\centering
\begin{tikzpicture}[scale=1.4]
\tikzset{enclosed/.style={draw, circle, inner sep=0pt, minimum size=1.49mm, fill=black}}
\node[enclosed, label={above: 5}] (v1) at (0,1) {};
\node[enclosed] (v2) at (0.95,0.31) {};
\node[enclosed] (v3) at (0.59,-0.81) {};
\node[enclosed] (v4) at (-0.59,-0.81) {};
\node[enclosed] (v5) at (-0.95,0.31) {};

\draw (v1) -- (v2) -- (v3) -- (v4) -- (v5) -- (v1) {};
\draw[->] (0,0) -- (0,0.8) node[midway, right] (charge) {1.1};

\node at (0,-1.8) {(R1)};

\node[enclosed, label={above: 5}] (w1) at (3.3,1) {};
\node[enclosed, label={right: $6^+$}] (w2) at (4.3,0) {};
\node[enclosed, label={below: 5}] (w3) at (3.3,-1) {};
\node[enclosed, label={left: $6^+$}] (w4) at (2.3,0) {};

\draw (w1) -- (w2) -- (w3) -- (w4) -- (w1) {};
\draw[->] (3.3,0.1) -- (3.3,0.8) node[midway, right] (charge2) {1};
\draw[->] (3.3,-0.1) -- (3.3,-0.8) node[midway, right] (charge3) {1};

\node[enclosed, label={above: 5}] (x1) at (6.5,1) {};
\node[enclosed, label={right: 5}] (x2) at (7.5,0) {};
\node[enclosed, label={below: $6^+$}] (x3) at (6.5,-1) {};
\node[enclosed, label={left: $6^+$}] (x4) at (5.5,0) {};

\draw (x1) -- (x2) -- (x3) -- (x4) -- (x1) {};
\draw[->] (6.5,0.1) -- (6.5,0.8) node[midway, right] (charge2) {$\f{3}{4}$};
\draw[->] (6.6,0) -- (7.3,0) node[midway, below] (charge3) {$\f{3}{4}$};

\node at (4.9,-1.8) {(R2)};

\node[enclosed] (y1) at (-0.25,-3) {};
\node[enclosed] (y2) at (1.25,-3) {};
\node[enclosed, label={left: $6^+$}] (y3) at (-0.25,-4.5) {};
\node[enclosed, label={right: $6^+$}] (y4) at (1.25,-4.5) {};
\node[enclosed, label={below: 5}] (y5) at (0.5,-5.8) {};
\node at (-0.05,-4.3) {$u$};
\node at (1.05,-4.3) {$v$};
\node at (0.75,-5.8) {$w$};

\draw (y1) -- (y2) -- (y4) -- (y3) -- (y1) {};
\draw (y3) -- (y5) -- (y4) {};
\draw[->] (0.5,-3.75) -- (0.5,-5.6) {};
\node[right] at (0.5,-5) {$\f{1}{2}$};

\node at (0.5,-6.8) {(R3)};

\node[enclosed] (u) at (5,-4.5) {};
\node[enclosed, label={above: 5}] (z1) at (5,-3) {};
\node[right] at (5,-3.5) (l1) {$\f{2}{3}$};
\node[enclosed, label={above right,xshift=-0.5mm,yshift=-0.5mm: $6^+$}] (z2) at (6.06,-3.44) {};
\node[enclosed, label={right: 5}] (z3) at (6.5,-4.5) {};
\node[below] at (6,-4.5) (l3) {$\f{1}{3}$};
\node[enclosed, label={below right,xshift=-0.5mm,yshift=0.5mm: 5}] (z4) at (6.06,-5.56) {};
\node[below left] at (5.71,-5.21) (l4) {$\f{1}{3}$};
\node[enclosed, label={below: $6^+$}] (z5) at (5,-6) {};
\node[enclosed, label={below left,xshift=0.5mm,yshift=0.5mm: $6^+$}] (z6) at (3.94,-5.56) {};
\node[enclosed, label={left: 5}] (z7) at (3.5,-4.5) {};
\node[below] at (4,-4.5) (l7) {$\f{2}{3}$};
\node[enclosed, label={above left,xshift=0.5mm,yshift=-0.5mm: $6^+$}] (z8) at (3.94,-3.44) {};

\begin{scope}[decoration={markings, mark=at position 0.67 with {\arrow{>}}}]
\draw[postaction={decorate}] (u) -- (z1) {};
\draw[postaction={decorate}] (u) -- (z3) {};
\draw[postaction={decorate}] (u) -- (z4) {};
\draw[postaction={decorate}] (u) -- (z7) {};
\end{scope}
\draw (u) -- (z2) {};
\draw (u) -- (z5) {};
\draw (u) -- (z6) {};
\draw (u) -- (z8) {};

\node at (5,-6.8) {(R4)};
\end{tikzpicture}
\caption{Examples of the discharging rules. The numbers next to vertices indicate their degrees, and the arrows indicate movement of charge.}\label{dischargingfig}
\end{figure}
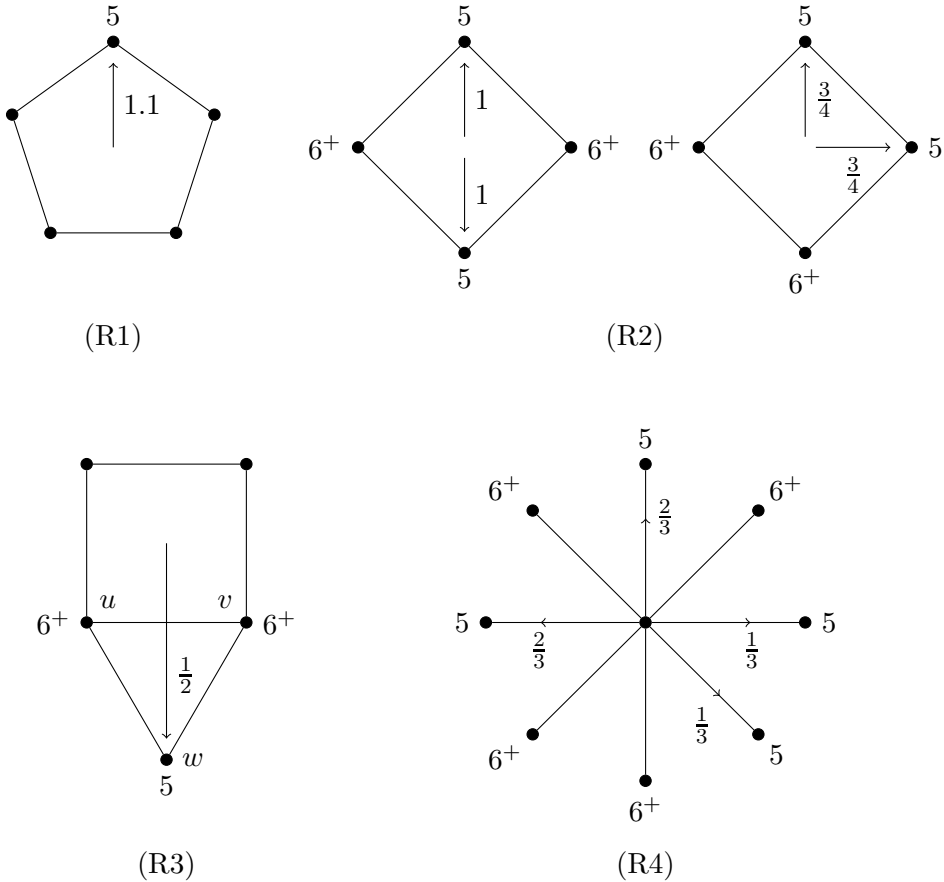

\begin{claim}\label{chargeswork}
After discharging, all faces and $6^+$-vertices have non-negative charge.
\end{claim}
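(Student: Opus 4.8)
The plan is to dispose of the $6^+$-vertices and $3$-faces immediately and then concentrate on $4$-faces and $5^+$-faces, where the structural input will be Claim~\ref{5neighbour}. No rule sends charge to a $6^+$-vertex, and a $6$-vertex (initial charge $0$) is touched by no rule at all, so it ends with charge $0$. A $7^+$-vertex $v$ has initial charge $d(v)-6\ge 1$; if it has no neighbouring $5$-vertex it retains this, and otherwise (R4) makes it distribute its \emph{entire} charge, leaving exactly $0$. A $3$-face has charge $0$ and sends nothing. So it remains to bound the outflow from $4$-faces (charge $2$) and $5^+$-faces (charge $2d(f)-6$). Here the key point is that consecutive boundary vertices are adjacent and $5$-vertices have odd degree, so by Claim~\ref{5neighbour} no three consecutive boundary vertices are all $5$-vertices; more precisely the $5$-vertices along a boundary walk induce a graph of maximum degree at most $1$. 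Hence they occur in blocks of size $1$ or $2$ separated by $6^+$-vertices (every non-$5$-vertex is a $6^+$-vertex, since $\delta(G)\ge 5$ by Claim~\ref{mindeg}).

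For a $4$-face this caps the number of incident $5$-vertices at two, and I would split into four cases according to whether there are zero, one, two opposite, or two adjacent $5$-vertices. In each case I would add up the charge leaving by (R2) together with the charge leaving by (R3), noting that each boundary edge joining two $6^+$-vertices triggers at most one transfer of $\tf{1}{2}$ via (R3); a short check then gives total outflow at most $2$. The delicate case is two \emph{adjacent} $5$-vertices: this is precisely the exceptional configuration in (R2), where the charge to each $5$-vertex drops to $\tf{3}{4}$, leaving exactly enough room for the single admissible (R3) transfer of $\tf{1}{2}$ across the edge joining the two $6^+$-vertices, for a total of $\tf{3}{2}+\tf{1}{2}=2$.

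For a $5^+$-face $f$ of size $k\ge 5$, write $a$ and $b$ for the numbers of $5$-blocks of sizes $1$ and $2$, so that $s:=a+2b$ vertices each receive $1.1$ via (R1). The boundary edges meeting a $5$-block number exactly $2a+3b$, so at most $k-(2a+3b)$ boundary edges join two $6^+$-vertices, bounding the number of (R3) transfers; hence the outflow is at most $1.1(a+2b)+\tf{1}{2}\bigl(k-2a-3b\bigr)=\tf{1}{10}a+\tf{7}{10}b+\tf{1}{2}k$. It then remains to verify $\tf{1}{10}a+\tf{7}{10}b+6\le\tf{3}{2}k$, which follows from the two lower bounds $k\ge 2a+3b$ (blocks plus their separators) and $k\ge 5$: the former settles every $(a,b)$ with $2.9a+3.8b\ge 6$, and the finitely many remaining pairs are dealt with using $k\ge 5$.

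I expect the genuine difficulty to lie in this bookkeeping for the $4$- and $5^+$-faces rather than in any single clever idea: one must ensure the (R3) transfers are counted without overlap and that they mesh correctly with the exceptional $\tf{3}{4}$-clause of (R2). The arithmetic is tight --- the two-adjacent $4$-face and the size-$5$ face both attain outflow exactly equal to their initial charge --- so the constants $1.1,\,1,\,\tf{3}{4},\,\tf{1}{2}$ are evidently tuned to Claim~\ref{5neighbour}, and the whole claim hinges on the block-and-separator count surviving the degenerate situations (repeated boundary vertices, or vertices that happen to coincide) flagged at the start of this section.
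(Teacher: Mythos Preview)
Your proposal is correct. The treatment of vertices and of $4$-faces matches the paper's exactly: the paper runs the same four-way split on a $4$-face and uses the $\tfrac34$-exception of (R2) in precisely the way you describe. Where you diverge is in the organisation of the $5^+$-face case. The paper handles $5$-faces, $6$-faces and $7^+$-faces by three separate ad hoc estimates, bounding the (R3) outflow as at most $\tfrac12$ per incident $6^+$-vertex (by splitting each $\tfrac12$ into two $\tfrac14$'s attributed to the endpoints). You instead bound (R3) by $\tfrac12$ per boundary \emph{edge} joining two $6^+$-vertices and combine this with the block inequality $k\ge 2a+3b$ to obtain a single inequality covering all $k\ge 5$ at once. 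Your edge-based (R3) count is strictly sharper than the paper's vertex-based one, and that extra sharpness is exactly what lets the unified inequality close; the paper's looser count would not suffice for a one-shot argument and forces its case split. The structural input (Claim~\ref{5neighbour}) and the overall logic are identical --- your packaging is just more systematic, at the cost of a small residual finite check.
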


\begin{proof}
The only way in which a vertex can lose charge is by (R4), and by definition no vertex can end with negative charge after the application of this rule. Therefore a vertex can only end with negative charge if it began with negative charge, and by Claim \ref{mindeg}, this is only the case for $5$-vertices.

We now turn to faces. When considering the total charge given out by a face $f$, we can imagine the charge $\f{1}{2}$ distributed by (R3) as being split into two $\f{1}{4}$ charges, one given to $u$ and one to $v$. Each $6^+$-vertex incident to $f$ is on at most two edges for which (R3) applies. Therefore $f$ gives out charge at most $\f{1}{2}$ for each incident $6^+$-vertex, and at most $1.1$ for each incident $5$-vertex.

If $f$ is a $7^+$-face, then the initial charge $2d(f)-6$ is greater than $1.1d(f)$, so trivially $f$ must finish with non-negative charge. If $f$ is a $6$-face, then it has initial charge $6$, but it cannot have more than four incident $5$-vertices: otherwise, one $5$-vertex would be adjacent to two others, contradicting Claim \ref{5neighbour}. Note that this is still true even if the boundary of $f$ consists of two disjoint $3$-cycles.  Therefore, by (R1) and (R3), $f$ gives out charge at most $4.4 + 1 < 6$, so it finishes with positive charge.

If $f$ is a $5$-face, then it has initial charge $4$, and by Claim \ref{5neighbour} it has at most three incident $5$-vertices. If it has exactly three, then the two remaining vertices are not adjacent on $f$ and therefore (R3) does not apply; thus $f$ gives out charge at most $3.3$. If instead $f$ has at most two incident $5$-vertices, then it gives out charge at most $2.2 + \f{3}{2} < 4$, so in either case $f$ finishes with positive charge.

If $f$ is a $4$-face, then it begins with charge $2$ and has at most two incident $5$-vertices by Claim \ref{5neighbour}. First suppose $f$ has exactly two $5$-vertices. If they are adjacent, then (R2) implies that they each receive charge $\f{3}{4}$. The remaining two vertices of $f$ are $6^+$-vertices, so the greatest additional charge that $f$ can give out is $\f{1}{2}$, by (R3). Thus $f$ gives out charge at most $2$, as required. If instead the two $5$-vertices are not adjacent, then they each receive charge $1$ by (R2), but (R3) does not apply and so $f$ again gives out total charge $2$.

Now consider the case where $f$ is a $4$-face with exactly one incident $5$-vertex. This vertex receives charge $1$, and (R3) applies to at most two edges of $f$, so it gives out total charge at most $2$. Finally, if every vertex of $f$ is a $6^+$-vertex, then it gives out charge at most $\f{1}{2}$ for each vertex, and so we are done.
\end{proof}

Our graph $G$ begins with total charge at most $0$ and the rules preserve charge, so if $G$ has minimum degree $5$, Claim \ref{chargeswork} implies that some $5$-vertex must finish with charge at most $0$. If instead $\delta(G) = 6$, then as noted earlier, $G$ is a $6$-regular triangulation, and thus every vertex and face start and end with charge $0$. The discharging method therefore does not help when $\delta(G) = 6$, and we will have to treat this case separately.

For the remainder of this section, we will restrict ourselves to the case where $\delta(G) = 5$. Let $v$ be a $5$-vertex that has charge at most $0$ after the discharging process: in other words, $v$ receives total charge at most $1$ during discharging (recall that $v$ cannot give out charge). Let the neighbours of $v$ be $v_1$, $v_2$, $v_3$, $v_4$ and $v_5$ in anticlockwise order.

\begin{claim}
The five faces around $v$ are all different.
\end{claim}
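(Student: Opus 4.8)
The plan is to prove this by a charge argument rather than a direct topological one, exploiting the fact (established in the paragraph preceding the claim) that $v$ receives total charge at most $1$ during discharging. Label the five sectors around $v$ by their position, so that sector $i$ is the angular region between the consecutive edges $vv_i$ and $vv_{i+1}$ (indices mod $5$); the claim is exactly that these five sectors belong to five distinct faces of $G$. Suppose not, so that two of the sectors, say sectors $i$ and $j$, lie on one common face $f$. Each sector at $v$ corresponds to a distinct visit to $v$ in the closed walk bounding the face that contains it, so $v$ appears with multiplicity at least $2$ in the boundary walk of $f$.

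First I would rule out $f$ being a $3$-face. The boundary of a $3$-face is a closed walk of size $3$, and since $G$ is simple it has no self-loops or repeated edges, so this walk visits three distinct vertices, each exactly once. In particular $v$ appears on it at most once, contradicting multiplicity at least $2$. Hence $f$ must be a $4^+$-face.

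Next I would record the uniform lower bound that every $4^+$-face sends charge at least $\tfrac{3}{4}$ to each of its incident $5$-vertex corners: by (R1) a $5^+$-face sends $1.1$, and by (R2) a $4$-face sends either $1$ or $\tfrac{3}{4}$. Applying the convention that each appearance of $v$ on the boundary of $f$ is treated as a separate incident vertex, the face $f$ therefore sends $v$ charge at least $\tfrac{3}{4}$ for each of its (at least two) corners at $v$, that is, at least $2 \cdot \tfrac{3}{4} = \tfrac{3}{2}$ in total. Since every contribution of charge received by $v$ is nonnegative, this already forces $v$ to receive more than $1$, contradicting the choice of $v$.

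The main point to get right is the bookkeeping linking sectors to boundary-walk multiplicity: I must confirm that two distinct sectors of $f$ at $v$ genuinely correspond to two distinct appearances of $v$ in the bounding walk, so that the multiplicity convention in the discharging rules applies and $f$ pays $v$ twice, and separately that no $3$-face can be the repeated face. Once these two topological checks are in place, the numerical inequality $\tfrac{3}{2} > 1$ closes the argument at once, and no further feature of the torus is needed.
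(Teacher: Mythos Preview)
Your proposal is correct and follows essentially the same approach as the paper: assume two sectors at $v$ lie in a common face $f$, observe $f$ must be a $4^+$-face, and then use the multiplicity convention in the discharging rules so that $f$ sends at least $2\cdot\tfrac{3}{4}=\tfrac{3}{2}>1$ to $v$, a contradiction. The paper is terser (it simply asserts that $f$ is ``clearly'' a $4^+$-face), whereas you spell out the $3$-face exclusion and the sector--multiplicity correspondence, but the argument is the same.
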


\begin{proof}
Suppose that two of the faces around $v$ are the same face $f$. This face clearly must be a $4^+$-face. By the discharging rules, $f$ sends charge at least $\f{3}{4}$ to $v$ for each appearance of $v$ on the boundary of $f$, so $f$ sends total charge at least $\f{3}{2}$ to $v$, a contradiction.
\end{proof}

We will make use the following lemma to eliminate the cases in the remainder of the proof.

\begin{lemma}\label{4vlemma}
Suppose that the edges $v_1v_2$, $v_2v_3$ and $v_3v_4$ are all present in $G$, and that $v_2$ and $v_3$ are $6$-vertices with a common neighbour $x \neq v$. Let $c$ be a nice colouring of $G \setminus \{v\}$ in which $c(v_1)$, $c(v_2)$, $c(v_3)$ and $c(v_4)$ are all distinct. Then $v_2$ and $v_3$ do not forbid two distinct colours at $v$ by oddness unless at least one is already forbidden by properness.
\end{lemma}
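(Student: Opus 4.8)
The plan is to argue by contradiction, translating ``forbidden by oddness'' into a parity statement and then exploiting that $v_2$ and $v_3$ are \emph{even}-degree vertices sharing the neighbour $x$. Recall that for a neighbour $w$ of $v$, the colour $b(w)$ (if it exists) is precisely the unique colour appearing an odd number of times among the colours of $N(w)\setminus\{v\}$: placing $b(w)$ at $v$ flips its parity to even and leaves every colour appearing an even number of times. Since $v_2$ is a $6$-vertex, $N(v_2)\setminus\{v\}$ consists of exactly $5$ vertices, so an odd number of colours occur there an odd number of times, and $b(v_2)$ exists exactly when this number is $1$. I would suppose, for contradiction, that $b(v_2)$ and $b(v_3)$ both exist, that $b(v_2)\neq b(v_3)$, and that neither is forbidden at $v$ by properness; the last condition means $b(v_2),b(v_3)\notin\{c(v_1),c(v_2),c(v_3),c(v_4),c(v_5)\}$.

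The key step is to pin down the colour multiset on $N(v_2)\setminus\{v\}$ exactly. The vertices $v_1$ and $v_3$ both lie in $N(v_2)\setminus\{v\}$ (using the edges $v_1v_2$ and $v_2v_3$), their colours are distinct by hypothesis, and both differ from $b(v_2)$ since $b(v_2)$ is not forbidden by properness. Hence each of $c(v_1),c(v_3)$ appears an even number of times, and therefore at least twice, among the five colours of $N(v_2)\setminus\{v\}$, while $b(v_2)$ appears at least once. Counting forces equality: the multiset of colours on $N(v_2)\setminus\{v\}$ is exactly $\{c(v_1),c(v_1),c(v_3),c(v_3),b(v_2)\}$. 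In particular $c(x)\in\{c(v_1),c(v_3),b(v_2)\}$, and since $x$ is adjacent to $v_3$ we have $c(x)\neq c(v_3)$, leaving $c(x)\in\{c(v_1),b(v_2)\}$. Running the identical argument at the $6$-vertex $v_3$, using $v_2,v_4\in N(v_3)\setminus\{v\}$ and $c(x)\neq c(v_2)$, gives $c(x)\in\{c(v_4),b(v_3)\}$.

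I would finish by intersecting these two constraints, which is where the shared neighbour $x$ does its work: $c(x)$ must simultaneously lie in $\{c(v_1),b(v_2)\}$ and in $\{c(v_4),b(v_3)\}$. The four possible matches are all impossible: $c(v_1)\neq c(v_4)$ by the distinctness hypothesis; $c(v_1)=b(v_3)$ and $c(v_4)=b(v_2)$ are excluded because $b(v_2),b(v_3)$ are not forbidden by properness; and $b(v_2)=b(v_3)$ contradicts our assumption that the two forbidden colours are distinct. This contradiction completes the argument.

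The main obstacle, and the heart of the lemma, is the exact determination of the colour multiset on each of $N(v_2)\setminus\{v\}$ and $N(v_3)\setminus\{v\}$: it is precisely the evenness of the degrees (so that five vertices remain after deleting $v$) together with the distinctness of $c(v_1),c(v_3)$ (resp.\ $c(v_2),c(v_4)$) that squeezes the count and forces $c(x)$ into a two-element set in each case. I would also take care that the argument is insensitive to possible coincidences among the named vertices (for instance $x$ coinciding with $v_1$ or $v_4$), which the excerpt warns may occur; the multiset formulation above is robust to this, since it only ever refers to $c(x)$ and the forced colour classes, never to the identities of the remaining neighbours of $v_2$ and $v_3$.
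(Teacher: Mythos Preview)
Your proof is correct and follows essentially the same route as the paper: assume for contradiction that $b(v_2),b(v_3)$ exist, are distinct, and lie outside $\{c(v_1),\dots,c(v_4)\}$; use the parity/degree count to force the colour multiset on $N(v_2)\setminus\{v\}$ to be $\{c(v_1),c(v_1),c(v_3),c(v_3),b(v_2)\}$ (and symmetrically at $v_3$); then derive incompatible constraints on $c(x)$. The only cosmetic difference is that you further trim the sets using $c(x)\neq c(v_2),c(v_3)$, which the paper does not bother with since the three-element sets $\{c(v_1),c(v_3),b(v_2)\}$ and $\{c(v_2),c(v_4),b(v_3)\}$ are already disjoint.
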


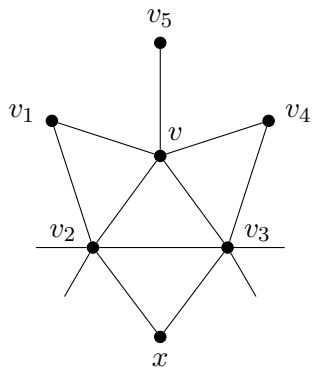
\begin{figure}[ht]
    \centering
\begin{tikzpicture}[scale=1.5]
\tikzset{enclosed/.style={draw, circle, inner sep=0pt, minimum size=.15cm, fill=black}}
\node[enclosed, label={above,xshift=2mm: $v$}] (v) at (0,0) {};
\node[enclosed, label={above: $v_5$}] (v5) at (0,1) {};
\node[enclosed, label={right,yshift=1mm: $v_4$}] (v4) at (0.95,0.31) {};
\node[enclosed, label={right,yshift=2mm: $v_3$}] (v3) at (0.59,-0.81) {};
\node[enclosed, label={left,yshift=2mm: $v_2$}] (v2) at (-0.59,-0.81) {};
\node[enclosed, label={left,yshift=1mm: $v_1$}] (v1) at (-0.95,0.31) {};
\node[enclosed, label={below: $x$}] (x) at (0,-1.6) {};

\draw (v) -- (v1) {};
\draw (v) -- (v2) {};
\draw (v) -- (v3) {};
\draw (v) -- (v4) {};
\draw (v) -- (v5) {};
\draw (v1) -- (v2) -- (v3) -- (v4) {};
\draw (v2) -- (x) -- (v3) {};
\draw (v2) -- ++(180:5mm) {};
\draw (v2) -- ++(240:5mm) {};
\draw (v3) -- ++(0:5mm) {};
\draw (v3) -- ++(300:5mm) {};
\end{tikzpicture}
    \caption{The setting of Lemma \ref{4vlemma}. All edges from $v$, $v_2$ and $v_3$ are shown.}
    \label{4vlemmafig}
\end{figure}

\begin{proof}
Suppose that a colouring $c$ exists as above, and let $c(v_i) = i$ for $i = 1,2,3,4$. Suppose further that $v_2$ and $v_3$ forbid colours $2'$ and $3'$ respectively at $v$ by oddness, where $2' \neq 3'$ and $2',3' \notin \{1,2,3,4\}$. Now the colours of the neighbours of $v_2$ in $G \setminus \{v\}$ must be $1,1,3,3,2'$ in some order. In particular, $c(x) \in \{1,3,2'\}$. Similarly, consideration of the colours around $v_3$ shows that $c(x) \in \{2,4,3'\}$. But now there is no possible colour for $x$ and we have a contradiction.
\end{proof}

We will need one more structural lemma.

\begin{lemma}\label{adjacent5}
$G$ does not contain two adjacent $5$-vertices which have two common neighbours.
\end{lemma}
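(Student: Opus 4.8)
The plan is to argue by contradiction using the minimality of $G$. Suppose $u$ and $w$ are adjacent $5$-vertices with two common neighbours $a$ and $b$, and write the remaining neighbours of $u$ as $u_1,u_2$ and those of $w$ as $w_1,w_2$. (If $u$ and $w$ share more than two common neighbours the argument below only becomes easier, since each extra common neighbour replaces two single-vertex obligations by a single constraint on the pair of new colours; so I treat the tight case of exactly two common neighbours.) Since $H := G \setminus \{u,w\}$ is toroidal and smaller than $G$, minimality yields a nice colouring $c$ of $H$. It then suffices to assign colours $\alpha$ and $\beta$ to $u$ and $w$ so that the resulting colouring of $G$ is nice; this contradicts the assumption that $G$ has no nice colouring.

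First I would record all constraints on the pair $(\alpha,\beta)$. Properness requires $\alpha\neq\beta$, $\alpha\notin\{c(a),c(b),c(u_1),c(u_2)\}$ and $\beta\notin\{c(a),c(b),c(w_1),c(w_2)\}$. Since $u$ and $w$ have odd degree, the colouring is automatically odd at $u$ and $w$, and it is unchanged (hence still odd) at every vertex not adjacent to $u$ or $w$. At each private neighbour $u_i$ (resp.\ $w_i$) only the colour $\alpha$ (resp.\ $\beta$) is added to its neighbourhood, so oddness there forbids at most a single value $b(u_i)$ of $\alpha$ (resp.\ $b(w_i)$ of $\beta$), exactly as in the notion of a colour forbidden by oddness. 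The only genuinely new feature is oddness at the common neighbours $a$ and $b$, each of which gains \emph{both} $\alpha$ and $\beta$. Writing $T_a = N(a)\setminus\{u,w\}$, a short parity computation shows that, when $\alpha\neq\beta$, oddness fails at $a$ precisely when the set of colours occurring an odd number of times in $T_a$ equals $\{\alpha,\beta\}$; thus $a$ forbids at most one unordered pair $\{\alpha,\beta\}$, and likewise for $b$.

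Collecting these, $\alpha$ must avoid at most six colours, namely $c(a),c(b),c(u_1),c(u_2)$ together with $b(u_1),b(u_2)$, and symmetrically $\beta$ must avoid at most six, so each has at least three admissible values. Let $S_\alpha$ and $S_\beta$ be the sets of admissible colours for $\alpha$ and $\beta$. I would finish by a counting argument: the number of ordered pairs in $S_\alpha\times S_\beta$ with $\alpha\neq\beta$ is $|S_\alpha||S_\beta|-|S_\alpha\cap S_\beta| \ge |S_\alpha||S_\beta|-\min(|S_\alpha|,|S_\beta|) \ge 6$ since $|S_\alpha|,|S_\beta|\ge 3$, whereas the two forbidden pairs coming from $a$ and $b$ exclude at most $2+2=4$ of these ordered pairs. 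Hence at least two admissible choices of $(\alpha,\beta)$ survive, and any of them extends $c$ to a nice colouring of $G$, the desired contradiction.

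The main obstacle I expect is the oddness analysis at the common neighbours: one must check carefully that each of $a$ and $b$ contributes only a single forbidden pair rather than forbidding individual colours, because it is exactly this saving---packing the properness-and-oddness obligations of $a$ and $b$ into two pair-constraints---that makes the final count close. I would also need to confirm that the coincidences flagged in the preamble (two named vertices being equal, or unexpected adjacencies) only ever reduce the number of constraints, so that the bounds $|S_\alpha|,|S_\beta|\ge 3$ and the ``at most two forbidden pairs'' conclusion are never violated; in each such case an extra common neighbour trades two single-colour constraints for one pair constraint, and an extra edge among the $u_i,w_i$ leaves the count unchanged.
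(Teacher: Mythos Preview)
Your argument is correct, and it is genuinely different from the paper's. The paper contracts the edge $uw$ to a single vertex and takes a nice colouring $c$ of $G/\{u,w\}$; it then tries to place $c(\{u,w\})$ at one of $u,w$ and extend greedily to the other, and when both attempts are blocked it reads off, from the nine pairwise-distinct forbidden colours in each case, an explicit pair $(b(x),b(y))$ that works. Your proof instead deletes both vertices and handles the common neighbours by the clean parity observation that oddness at $a$ fails exactly when $O_a=\{\alpha,\beta\}$, turning each common neighbour into a single forbidden unordered pair rather than a single-colour constraint; the final count $|S_\alpha|\cdot|S_\beta|-|S_\alpha\cap S_\beta|\ge 6>4$ then finishes without any case split. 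The contraction route has the advantage that the contracted vertex's colour already certifies oddness at the private neighbours, so one is really only extending to one vertex at a time; your route has the advantage of being a pure counting argument with no clever recolouring step, and the pair-constraint formulation at $a,b$ is the key idea that makes the numbers close. One small point worth making explicit in a write-up: $a$ (and $b$) is non-isolated in $H$ since $\delta(G)\ge 5$, so $O_a\neq\emptyset$ and the nice colouring of $H$ is genuinely odd there.
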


\begin{figure}[ht]
    \centering
\begin{tikzpicture}[scale=1.5]
\tikzset{enclosed/.style={draw, circle, inner sep=0pt, minimum size=.15cm, fill=black}}
\node[enclosed, label={left: $u$}] (u) at (-0.5,0) {};
\node[enclosed, label={right: $v$}] (v) at (0.5,0) {};
\node[enclosed, label={above: $x$}] (x) at (0,0.86) {};
\node[enclosed, label={below: $y$}] (y) at (0,-0.86) {};
\node[enclosed, label={above left,xshift=0.5mm,yshift=-0.5mm: $w_1$}] (w1) at (-1.21,0.71) {};
\node[enclosed, label={below left,xshift=0.5mm,yshift=0.5mm: $w_2$}] (w2) at (-1.21,-0.71) {};
\node[enclosed, label={above right,xshift=-0.5mm,yshift=-0.5mm: $z_1$}] (z1) at (1.21,0.71) {};
\node[enclosed, label={below right,xshift=-0.5mm,yshift=0.5mm: $z_2$}] (z2) at (1.21,-0.71) {};

\draw (u) -- (v) {};
\draw (w1) -- (u) -- (x) -- (v) -- (z1) {};
\draw (w2) -- (u) -- (y) -- (v) -- (z2) {};
\end{tikzpicture}
    \caption{The setting of Lemma \ref{adjacent5}. All edges from $u$ and $v$ are shown.}
    \label{adjacent5fig}
\end{figure}

\begin{proof}
Suppose to the contrary that $u$ and $v$ are adjacent $5$-vertices with common neighbours $x$ and $y$. Let the two remaining neighbours of $u$ be $w_1$ and $w_2$, and let the two remaining neighbours of $v$ be $z_1$ and $z_2$. Note that we could have some $w_i = z_j$.

Consider the graph $G' = G/\{u, v\}$ formed by contracting the edge $uv$ to form a single vertex $\{u, v\}$. By the minimality of $G$, this graph has a nice colouring, which we will call $c$. We will use $c$ to colour the vertices of $G \setminus \{u,v\}$, and show that we can always choose colours at $u$ and $v$ to produce an odd colouring of $G$. Note that any colouring of $G$ will be odd at $u$ and $v$, since they both have odd degree.

Suppose that we colour $u$ with $c(\{u,v\})$. The resulting partial colouring is both proper and odd at each $w_i$ that is not the same as some $z_j$, since $c$ is an odd colouring of $G'$. Note that $u$ has degree $5$ so cannot forbid a colour at $v$ by oddness. For every remaining neighbour $t$ of $v$, let $b(t)$ be the colour that $t$ forbids at $v$ by oddness, if it exists. There are at most $5$ colours forbidden at $v$ by properness, and at most $4$ forbidden by oddness. Therefore there is always a colour available at $v$ to produce an odd colouring of $G$ unless the $9$ colours $c(\{u,v\}), c(x), c(y), c(z_1), c(z_2), b(x), b(y), b(z_1), b(z_2)$ are all distinct. Similarly, we could colour $v$ with $c(\{u,v\})$ instead of $u$, and define $b$ as above; there is no ambiguity in the definitions of $b(x)$ and $b(y)$ since the neighbourhoods of $x$ and $y$ have the same multisets of colours in each case. This partial colouring extends to an odd colouring of $G$ unless $c(\{u,v\}), c(x), c(y), c(w_1), c(w_2), b(x), b(y), b(w_1), b(w_2)$ are all distinct.

Suppose that the two sets of $9$ colours above are indeed distinct. Now we assign the colour $b(x)$ to $u$ and $b(y)$ to $v$. This results in a proper colouring of $G$ by distinctness. This colouring is odd at $w_i$ and $z_j$, also by distinctness. Finally, $c(\{u,v\})$ and $b(y)$ each appear an odd number of times in $N(x)$, since they are distinct from each other and from $b(x)$; similarly $c(\{u,v\})$ and $b(x)$ appear an odd number of times in $N(y)$. Thus $G$ has a nice colouring, which is a contradiction.
\end{proof}

\begin{prop}\label{3faces}
The five faces around $v$ are all $3$-faces.
\end{prop}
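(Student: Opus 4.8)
The plan is to argue by contradiction and combine the discharging rules with the colouring lemmas. Assume some face around $v$ is a $4^+$-face; recall that $v$ receives total charge at most $1$. First I would clear away the easy possibilities with charge alone. If any face around $v$ is a $5^+$-face, then by (R1) it alone sends $v$ charge $1.1 > 1$, a contradiction. If two of the faces around $v$ are $4$-faces, then by (R2) each sends at least $\tfrac34$, so together they send at least $\tfrac32 > 1$, again a contradiction. Hence at most one face around $v$ is a $4$-face, and it remains to rule out the case of exactly one. Writing this $4$-face as $f = v v_1 p v_2$, the other four faces are triangles, so that $v_2v_3$, $v_3v_4$, $v_4v_5$ and $v_5v_1$ are all edges while $v_1v_2$ is not; in other words the neighbours of $v$ form a path $v_1, v_5, v_4, v_3, v_2$ closed up through the $4$-face.

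Next I would pin down the local degrees and adjacencies. By Lemma \ref{adjacent5}, none of the ``internal'' neighbours $v_3, v_4, v_5$ can be a $5$-vertex, since each of them shares two common neighbours with the $5$-vertex $v$ (for instance $v_3$ shares $v_2$ and $v_4$). In the main case, where $f$ contributes its full charge $1$, a $7^+$ neighbour would send $v$ strictly positive extra charge by (R4), forcing the total above $1$; this rules out $7^+$ internal neighbours, so $v_3, v_4, v_5$ are $6$-vertices. I would then examine the face lying on the far side of the edge $v_3v_4$: if it were a $4^+$-face, then since $v_3$ and $v_4$ are $6^+$-vertices both incident to the triangle $v v_3 v_4$, rule (R3) would send $v$ an additional $\tfrac12$, and $\tfrac12$ together with the charge from $f$ exceeds $1$. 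So this face is a triangle $v_3 v_4 x$, giving $v_3$ and $v_4$ a common neighbour $x \neq v$; the same argument applied to $v_4v_5$ produces a common neighbour of $v_4$ and $v_5$. This is exactly the configuration needed to apply Lemma \ref{4vlemma} to each of the consecutive quadruples $v_2, v_3, v_4, v_5$ and $v_3, v_4, v_5, v_1$.

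Finally I would extend a colouring. Let $c$ be a nice colouring of $G \setminus \{v\}$, which exists by minimality, and suppose it does not extend to $v$; then all nine colours are forbidden at $v$. Since $\deg v = 5$ the extension is automatically odd at $v$, so the only obstruction is that the properness-forbidden colours $\{c(v_1), \dots, c(v_5)\}$ together with the oddness-forbidden colours cover all nine. A count using Claim \ref{5neighbour} (at most one neighbour of $v$ has odd degree) shows this can only happen if the colours $c(v_i)$ are distinct. But then Lemma \ref{4vlemma}, applied to both quadruples, forces the oddness-forbidden colours $b(v_3)$, $b(v_4)$ and $b(v_5)$ to coincide, so that far fewer than nine colours are actually forbidden — a contradiction. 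Hence $c$ extends to a nice colouring of $G$, contradicting minimality, and every face around $v$ must be a triangle.

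The main obstacle is the single-$4$-face case, where the colour budget is genuinely tight: five neighbours can in principle forbid ten colours against only nine available, and the argument closes only because Lemma \ref{4vlemma}, applied to two overlapping quadruples, collapses three oddness-forbidden colours into one. I must therefore verify its hypotheses with care — the distinctness of the relevant neighbour-colours (the one stubborn coincidence $c(v_3) = c(v_5)$, which breaks both quadruples, needs separate treatment) and the existence of the common neighbours. The most delicate subcase is the special configuration of (R2), where $f$ contributes only $\tfrac34$ and the lone $5$-vertex is $v_1$ or $v_2$: here a high even-degree internal neighbour need not be eliminated by (R4) alone, and the missing charge must instead be recovered from (R3) or a finer analysis before Lemma \ref{4vlemma} — which is stated only for $6$-vertices — can be brought to bear.
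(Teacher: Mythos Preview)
Your outline follows the paper's approach closely, but the key colouring step is wrong. The two applications of Lemma~\ref{4vlemma} do \emph{not} force $b(v_3)$, $b(v_4)$, $b(v_5)$ to coincide: each application only says that the two \emph{middle} vertices of that quadruple cannot both forbid distinct \emph{new} colours. If $b(v_4)$ happens to lie in $\{c(v_1),\dots,c(v_5)\}$ (or does not exist), both applications are vacuously satisfied, and nothing prevents $b(v_3)$ and $b(v_5)$ from being two distinct new colours. Together with $b(v_1)$ and $b(v_2)$ this gives four new oddness-forbidden colours, and with the five properness-forbidden colours all nine are blocked. The paper isolates exactly this residual configuration and disposes of it by a recolouring trick: one removes both $v$ and an appropriate neighbour $v_i$, assigns the old colour $c(v_i)$ to $v$, and then shows the partial colouring extends to $v_i$, using that $v$ is now an odd-degree neighbour of $v_i$ together with detailed information about the colours already present in $N(v_i)$. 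Your proposal contains no analogue of this step.

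Your handling of the non-distinct case is also incomplete. The count you sketch via Claim~\ref{5neighbour} does not work: that claim only bounds the number of odd-degree neighbours of $v$, and when all five $v_i$ are $6$-vertices (the generic situation in your ``main case'') each can forbid a colour by oddness, so $|c(N(v))|=4$ still permits nine forbidden colours in total. Moreover $c(v_3)=c(v_5)$ is not the only problematic coincidence; $c(v_2)=c(v_4)$, $c(v_2)=c(v_5)$, $c(v_1)=c(v_3)$ and $c(v_1)=c(v_4)$ each invalidate one of your two applications of Lemma~\ref{4vlemma}. The paper devotes a separate case to $|c(N(v))|=4$, pinning down which coincidence occurs and then arguing by a direct analysis of the colours appearing in $N(v_3)\setminus\{v\}$; something of this sort is unavoidable.
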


\begin{proof}
First note that, by (R1), a $5$-face would send charge greater than $1$ to $v$, which is impossible. As noted previously, a $4$-face sends charge at least $\f{3}{4}$, so $v$ can be incident to at most one $4$-face. Let the $4$-face be $v_5vv_1w$. Now Lemma \ref{adjacent5} implies that $v_2$, $v_3$ and $v_4$ are all $6^+$-vertices. 

\begin{figure}[ht]
    \centering
\begin{tikzpicture}[scale=1.5]
\tikzset{enclosed/.style={draw, circle, inner sep=0pt, minimum size=.15cm, fill=black}}
\node[enclosed, label={above,xshift=2mm: $v$}] (v) at (0,0) {};
\node[enclosed, label={above: $v_5$}] (v5) at (0,1) {};
\node[enclosed, label={right: $v_4$}] (v4) at (0.95,0.31) {};
\node[enclosed, label={below right,xshift=-0.5mm,yshift=0.3mm: $v_3$}] (v3) at (0.59,-0.81) {};
\node[enclosed, label={below left,xshift=0.5mm,yshift=0.3mm: $v_2$}] (v2) at (-0.59,-0.81) {};
\node[enclosed, label={left: $v_1$}] (v1) at (-0.95,0.31) {};
\node[enclosed, label={above left,xshift=0.5mm,yshift=-0.5mm: $w$}] (w) at (-0.75,1) {};

\draw (v) -- (v1) {};
\draw (v) -- (v2) {};
\draw (v) -- (v3) {};
\draw (v) -- (v4) {};
\draw (v) -- (v5) {};
\draw (v1) -- (v2) -- (v3) -- (v4) -- (v5) -- (w) -- (v1) {};
\end{tikzpicture}
    \label{one4facefig}
    \caption{$v$ and its surrounding faces}
\end{figure}
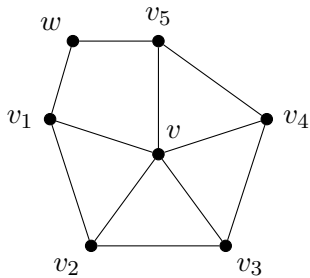

Let $c$ be a nice colouring of $G\setminus \{v\}$, which must exist by the minimality of $G$. We now divide into $3$ cases.
\begin{enumerate}
    \item $v$ has a neighbouring $5$-vertex.
    \item All the neighbours of $v$ are $6^+$-vertices, and $|c(N(v))| = 4$.
    \item All the neighbours of $v$ are $6^+$-vertices, and $|c(N(v))| = 5$.
\end{enumerate}
Note that we must have $|c(N(v))| \geq 4$, otherwise there would be at most $8$ colours forbidden at $v$ and we could extend $c$ to a nice colouring of $G$. Henceforth, if the edge $v_iv_{i+1}$ is present, we will refer to the face adjacent to $vv_iv_{i+1}$ along $v_iv_{i+1}$ as the \emph{external face along $v_iv_{i+1}$}.

\textbf{Case 1:} $v$ has a neighbouring $5$-vertex.\\
Without loss of generality, the $5$-vertex is $v_1$, and so $v_5$ is a $6^+$-vertex. Now $v_1$ does not forbid a colour at $v$ by oddness, so we must have $|c(N(v))| = 5$, otherwise there would be at most $8$ colours forbidden at $v$.

Note that a vertex of degree $d \geq 7$ can have at most $\left\lfloor\f{d}{2}\right\rfloor$ neighbouring blocks in the terminology of (R4). Since the total charge given out is $d-6$, this implies that a $7$-vertex gives out charge at least $\f{1}{3}$ to every block, and an $8^+$-vertex gives out charge at least $\f{1}{2}$.

If either $v_3$ or $v_4$ is a $7^+$-vertex, then by (R4) it sends charge at least $\f{1}{3}$ to $v$, since $v$ is a singleton block. This is a contradiction because $v$ already receives charge at least $\f{3}{4}$ from the $4$-face. Hence $v_3$ and $v_4$ are both $6$-vertices.

By (R3), the external face along $v_3v_4$ cannot be a $4^+$-face, otherwise it would send charge $\f{1}{2}$ to $v$. Hence it is a $3$-face, and so $v_3$ and $v_4$ have a common neighbour that is not $v$. Now we can apply Lemma \ref{4vlemma} to $v_2$, $v_3$, $v_4$, $v_5$, which implies that $v_3$ and $v_4$ cannot forbid distinct colours at $v$ by oddness that are not already in $c(N(v))$. Hence at most $8$ colours are forbidden at $v$ and we can extend $c$ to $v$. This completes Case 1.

\textbf{Case 2:} All the neighbours of $v$ are $6^+$-vertices, and $|c(N(v))| = 4$.\\
The external faces along $v_1v_2$, $v_2v_3$, $v_3v_4$ and $v_4v_5$ must all be $3$-faces, otherwise they would give charge $\f{1}{2}$ to $v$ by (R3). Denote the common neighbour of $v_2$ and $v_3$ on this external $3$-face by $x$, and the corresponding common neighbour of $v_3$ and $v_4$ by $y$. 

As in Case 1, if any of $v_2$, $v_3$, $v_4$ is a $7^+$-vertex then it sends charge at least $\f{1}{3}$ to $v$ by (R4). Hence $v_2$, $v_3$, $v_4$ are all $6$-vertices. Lemma \ref{4vlemma} now applies to either $\{v_1, v_2, v_3, v_4\}$ or $\{v_2, v_3, v_4, v_5\}$, giving a contradiction, unless $c(v_2) = c(v_4)$. Let $c(v_i) = i$ for $i = 1,2,3,5$.

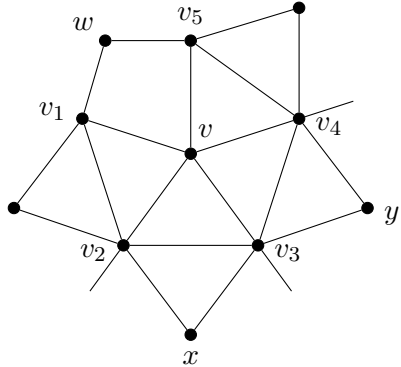
\begin{figure}[ht]
    \centering
\begin{tikzpicture}[scale=1.5]
\tikzset{enclosed/.style={draw, circle, inner sep=0pt, minimum size=.15cm, fill=black}}
\node[enclosed, label={above,xshift=2mm: $v$}] (v) at (0,0) {};
\node[enclosed, label={above: $v_5$}] (v5) at (0,1) {};
\node[enclosed, label={right,yshift=-1mm: $v_4$}] (v4) at (0.95,0.31) {};
\node[enclosed, label={right,yshift=-1mm: $v_3$}] (v3) at (0.59,-0.81) {};
\node[enclosed, label={left,yshift=-1mm: $v_2$}] (v2) at (-0.59,-0.81) {};
\node[enclosed, label={left,yshift=1mm: $v_1$}] (v1) at (-0.95,0.31) {};
\node[enclosed, label={above left,xshift=0.5mm,yshift=-0.5mm: $w$}] (w) at (-0.75,1) {};
\node[enclosed] (x1) at (-1.55,-0.48) {};
\node[enclosed, label={below: $x$}] (x) at (0,-1.6) {};
\node[enclosed, label={right,yshift=-1mm: $y$}] (y) at (1.55,-0.48) {};
\node[enclosed] (x4) at (0.95,1.29) {};

\draw (v) -- (v1) {};
\draw (v) -- (v2) {};
\draw (v) -- (v3) {};
\draw (v) -- (v4) {};
\draw (v) -- (v5) {};
\draw (v1) -- (v2) -- (v3) -- (v4) -- (v5) -- (w) -- (v1) {};
\draw (v1) -- (x1) -- (v2) -- (x) -- (v3) -- (y) -- (v4) -- (x4) -- (v5) {};
\draw (v2) -- ++(234:5mm) {};
\draw (v3) -- ++(306:5mm) {};
\draw (v4) -- ++(18:5mm) {};
\end{tikzpicture}
    \label{4facecase2fig}
    \caption{The setting of Case 2. All edges from $v$, $v_2$, $v_3$ and $v_4$ are shown.}
\end{figure}

Consider the colours in $N(v_2)\setminus\{v\}$. $v_2$ must forbid a colour $2' \notin \{1,2,3,5\}$ at $v$ by oddness, so the vertices of $N(v_2)\setminus\{v\}$ must have colours $1,1,3,3,2'$ in some order. Hence $c(x) \in \{1,2'\}$. Similarly $c(y) \in \{5,4'\}$, where $4'$ is defined analogously to $2'$.

But now $N(v_3)\setminus\{v\}$ has two vertices of colour $3$ as well as one of colour $1$ or $2'$ and one of colour $5$ or $4'$. There is therefore no way for $v_3$ to forbid a new colour $3'$ at $v$ by oddness, and so there are at most $8$ colours forbidden at $v$. This completes Case 2.

\textbf{Case 3:} All the neighbours of $v$ are $6^+$-vertices, and $|c(N(v))| = 5$.\\
First, note that the external faces along $v_1v_2$, $v_2v_3$, $v_3v_4$ and $v_4v_5$ must all be $3$-faces, as in Case 2. For $i = 1,2,3,4$, let $x_i$ be the common neighbour of $v_i$ and $v_{i+1}$ on this external $3$-face, and recall that $w$ is the final vertex of the $4$-face containing $v_1$, $v$ and $v_5$. Let $c(v_i) = i$, and let the colour that $v_i$ forbids at $v$ by oddness be $i'$, if it exists.

We also have that $v_2$, $v_3$ and $v_4$ must all be $6$-vertices, since if any were $7^+$-vertices, they would give charge at least $\f{1}{3}$ to $v$ by (R4). In addition, if either $v_1$ or $v_5$ is a $7^+$-vertex, then it also gives charge at least $\f{1}{3}$ to $v$ except in the event that $\{v\}$ is not a block; this only happens if $w$ is a $5$-vertex. However, in this case the face $vv_1wv_5$ gives charge $1$ to $v$ by (R2), and the $7^+$-vertex still sends some positive charge to $v$, which is a contradiction. Hence all the neighbours of $v$ are $6$-vertices.

\begin{figure}[ht]
    \centering
\begin{tikzpicture}[scale=1.5]
\tikzset{enclosed/.style={draw, circle, inner sep=0pt, minimum size=.15cm, fill=black}}
\node[enclosed, label={above,xshift=2mm: $v$}] (v) at (0,0) {};
\node[enclosed, label={above left: $v_5$}] (v5) at (0,1) {};
\node[enclosed, label={right,yshift=-1mm: $v_4$}] (v4) at (0.95,0.31) {};
\node[enclosed, label={right,yshift=-1mm: $v_3$}] (v3) at (0.59,-0.81) {};
\node[enclosed, label={left,yshift=-1mm: $v_2$}] (v2) at (-0.59,-0.81) {};
\node[enclosed, label={left,yshift=-1mm: $v_1$}] (v1) at (-0.95,0.31) {};
\node[enclosed, label={above left,xshift=0.5mm,yshift=-0.5mm: $w$}] (w) at (-0.75,1) {};
\node[enclosed, label={left,yshift=-1mm: $x_1$}] (x1) at (-1.55,-0.48) {};
\node[enclosed, label={below: $x_2$}] (x2) at (0,-1.6) {};
\node[enclosed, label={right,yshift=-1mm: $x_3$}] (x3) at (1.55,-0.48) {};
\node[enclosed, label={right,yshift=2mm: $x_4$}] (x4) at (0.95,1.29) {};

\draw (v) -- (v1) {};
\draw (v) -- (v2) {};
\draw (v) -- (v3) {};
\draw (v) -- (v4) {};
\draw (v) -- (v5) {};
\draw (v1) -- (v2) -- (v3) -- (v4) -- (v5) -- (w) -- (v1) {};
\draw (v1) -- (x1) -- (v2) -- (x) -- (v3) -- (y) -- (v4) -- (x4) -- (v5) {};
\draw (v1) -- ++(162:5mm) {};
\draw (v2) -- ++(234:5mm) {};
\draw (v3) -- ++(306:5mm) {};
\draw (v4) -- ++(18:5mm) {};
\draw (v5) -- ++(90:5mm) {};
\end{tikzpicture}
    \label{4facecase3fig}
    \caption{The setting of Case 3. All edges from $v$ and each $v_i$ are shown.}
\end{figure}
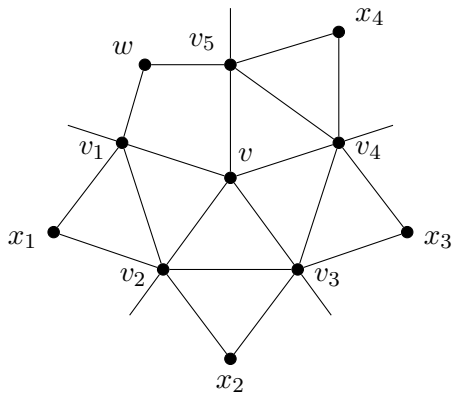

Lemma \ref{4vlemma} applies to both $\{v_1, v_2, v_3, v_4\}$ and $\{v_2, v_3, v_4, v_5\}$, and there must be at least $4$ colours forbidden by oddness at $v$. Thus the only possibility is that these are $1',2',4',5'$, while $v_3$ does not forbid a colour by oddness at $v$ that is not already forbidden in some other way.

The colours of the vertices of $N(v_2)\setminus\{v\}$ must be $1,1,3,3,2'$, and so $c(x_2) \in \{1,2'\}$. Similarly $c(x_3) \in \{5,4'\}$. Let $c'$ be the restriction of $c$ to $G\setminus\{v,v_2\}$. We will extend $c'$ to a nice colouring of $G$. 

First, assign colour $2$ to $v$. This produces a proper partial colouring that is odd at $v_4$ and $v_5$. The colouring is also odd at $v_2$, since $v_2$ has exactly one neighbour of colour $2$. In addition, the colours $2,4,c(x_2),c(x_3)$ are all distinct and all already used in $N(v_3)$, so $v_3$ cannot forbid a colour at $v_2$ by oddness. Therefore there are at most $8$ colours forbidden at $v_2$: the $4$ colours of its neighbours, and at most $4$ colours forbidden by oddness, since neither $v$ nor $v_3$ forbids a colour by oddness. Hence there is a colour left over that we can use to colour $v_2$. This finishes the proof of Proposition \ref{3faces}.
\end{proof}

We now know that all the faces around $v$ are $3$-faces. Lemma \ref{adjacent5} therefore implies that all the $v_i$ are $6^+$-vertices. As before, let $c$ be a nice colouring of $G\setminus\{v\}$. Once again, we must have $|c(N(v))| \geq 4$, so we first rule out the case $|c(N(v))| = 4$.

\begin{prop}\label{not4colours}
We have $|c(N(v))| = 5$.
\end{prop}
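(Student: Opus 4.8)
The plan is to suppose for contradiction that $|c(N(v))|=4$ and then extend $c$ to $v$. Only $4$ colours are forbidden at $v$ by properness, so if $c$ failed to extend, all $5$ colours forbidden by oddness would be present, pairwise distinct, and distinct from the four properness colours. In particular each $v_i$ would forbid a colour by oddness, which forces every $v_i$ to have \emph{even} degree; this is the observation that drives everything. Since $c$ is proper and all five faces at $v$ are $3$-faces, the neighbours form a $5$-cycle $v_1v_2v_3v_4v_5$, and the unique repeated colour sits on a non-adjacent, hence distance-$2$, pair; relabelling cyclically I may assume $c(v_1)=c(v_3)$.

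Next I would fix the local picture. Of the five consecutive quadruples, only $\{v_2,v_3,v_4,v_5\}$ and $\{v_4,v_5,v_1,v_2\}$ have four distinct colours, with central edges $v_3v_4$ and $v_5v_1$. Were Lemma~\ref{4vlemma} to apply to one of them—i.e.\ were both central vertices $6$-vertices sharing a neighbour across an external $3$-face—its two central vertices could not both forbid distinct new colours, contradicting the previous paragraph. So each central edge is \emph{obstructed}: either an endpoint is an $8^+$-vertex (being even, a non-$6$ neighbour is at least $8$) or the external face along it is a $4^+$-face. By the rules each obstruction pushes at least $\f{1}{2}$ onto $v$ (an $8^+$-vertex because $\{v\}$ is a singleton block, a $4^+$-face by (R3)), whereas $v$ absorbs at most $1$. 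Hence there are exactly two obstructions and they use up the budget, so $v_2$ is a $6$-vertex and the faces along $v_1v_2$ and $v_2v_3$ are $3$-faces, producing common neighbours $x_1$ of $v_1,v_2$ and $x_2$ of $v_2,v_3$.

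I would then carry out a parity count at the between-vertex $v_2$, exactly as in Case 2 of Proposition~\ref{3faces}. Provided $v_1$ and $v_3$ are themselves $6$-vertices, insisting that each forbid a colour by oddness pins the multisets of $N(v_1)$ and $N(v_3)$, and with properness against $v_2$ this gives $c(x_1)\in\{c(v_5),1'\}$ and $c(x_2)\in\{c(v_4),3'\}$, writing $i'$ for the oddness colour of $v_i$. These sets are disjoint, so $c(x_1)\neq c(x_2)$. Since $N(v_2)\setminus\{v\}$ consists of two vertices coloured $c(v_1)$ together with $x_1$, $x_2$ and one more vertex, for $v_2$ to have a single oddness colour that colour must coincide with $c(v_4)$, $c(v_5)$, $1'$ or $3'$—never a genuinely new colour. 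This contradicts $v_2$ forbidding a distinct new colour, settling the main case.

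The delicate case, which I expect to be the main obstacle, is when one of $v_1,v_3$ is itself the $8^+$-obstruction: its neighbourhood is then too large to force the colour of the relevant $x_i$, and the pinning fails. Here I would switch to a recolouring argument. Deleting $v_2$ and giving $v$ the colour $c(v_2)$ is proper at $v$ and, as $c(v_2)$ is none of $1',3',4',5'$, keeps $v_1,v_3,v_4,v_5$ odd; it remains to recolour $v_2$. Because $c(v_1)=c(v_3)$, the multiset $N(v_2)\setminus\{v\}$ uses at most three colours, so at most four colours are forbidden at $v_2$ by properness, and the degree-$5$ vertex $v$ imposes no oddness constraint. Exhibiting one further saving—as in the finish of Case 3 of Proposition~\ref{3faces}—would bring the count to at most $8$ and allow $v_2$ to be recoloured, completing the proof. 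Verifying that last saving, and that the budget genuinely falls to $8$, is the step I expect to require the most care.
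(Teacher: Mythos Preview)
Your plan is essentially the paper's proof, up to a cyclic relabelling (the paper takes $c(v_2)=c(v_4)$, so its ``free'' vertex $v_3$ is your $v_2$). The case split, the use of Lemma~\ref{4vlemma} on the two all-distinct quadruples, the budget argument forcing the middle vertex to be a $6$-vertex flanked by external $3$-faces, and the parity count in the main case all match.

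For the delicate case, the paper fills your acknowledged gap as follows. Since the two obstructions together give charge at most $1$ and each gives at least $\tfrac{1}{2}$, the $8^+$-vertex (say $v_1$ in your labelling) must send charge \emph{exactly} $\tfrac{1}{2}$. By (R4) this forces $d(v_1)=8$ with four singleton $5$-blocks, so the neighbours of $v_1$ alternate between $5$-vertices and $6^+$-vertices. In the cyclic order $\ldots,v,v_2,x_1,\ldots$ around $v_1$, alternation then makes $x_1$ a $5$-vertex. Now both $v$ and $x_1$ have odd degree, leaving at most four oddness constraints when recolouring $v_2$, and the count drops to $8$. (This is a direct budget argument rather than the Case~3 mechanism you cite.)

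Two minor wording points. First, saying the partial colouring ``keeps $v_1,v_3$ odd'' is premature: $v_2\in N(v_1)\cap N(v_3)$ is uncoloured at that stage, and oddness at $v_1,v_3$ is only secured when you recolour $v_2$ respecting their oddness constraints; the paper accordingly asserts oddness only at (your) $v_4,v_5$ and at $v_2$ itself. Second, the reason $N(v_2)\setminus\{v\}$ has at most three colours is not that $c(v_1)=c(v_3)$ but that $v_2$ forbids a colour by oddness, forcing the multiset pattern $\{a,a,b,b,2'\}$ among its five other neighbours.
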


\begin{proof}
Suppose that $|c(N(v))| = 4$. Without loss of generality, we have $c(v_i) = i$ for $i = 1,2,3,5$ and $c(v_4) = 2$. Since all $9$ colours must be forbidden at $v$ by either properness or oddness, every $v_i$ forbids a colour by oddness, which we call $i'$ as before. Thus every $v_i$ has even degree.

By Lemma \ref{4vlemma}, if $v_1$ and $v_2$ are both $6$-vertices and the external face along $v_1v_2$ is a $3$-face, then one of $v_1$ and $v_2$ does not forbid a distinct colour by oddness, which is a contradiction. Hence either the external face along $v_1v_2$ is a $4^+$-face, giving charge $\f{1}{2}$ to $v$ by (R3), or one of $v_1$ and $v_2$ is an $8^+$-vertex, giving charge at least $\f{1}{2}$ to $v$ by (R4). Similarly, either the external face along $v_4v_5$ is a $4^+$-face or one of $v_4$ and $v_5$ is an $8^+$-vertex. Together these give a total charge of at least $1$ to $v$. This implies that $v$ cannot receive any further charge: in particular, the external faces along $v_2v_3$ and $v_3v_4$ are $3$-faces, and $v_3$ has degree $6$. Let $x$ be the common neighbour of $v_2$ and $v_3$ on this external face, and let $y$ be the corresponding common neighbour of $v_3$ and $v_4$.

\begin{figure}[ht]
    \centering
\begin{tikzpicture}[scale=1.5]
\tikzset{enclosed/.style={draw, circle, inner sep=0pt, minimum size=.15cm, fill=black}}
\node[enclosed, label={above,xshift=2mm: $v$}] (v) at (0,0) {};
\node[enclosed, label={above: $v_5$}] (v5) at (0,1) {};
\node[enclosed, label={right,yshift=1mm: $v_4$}] (v4) at (0.95,0.31) {};
\node[enclosed, label={right,yshift=-1mm: $v_3$}] (v3) at (0.59,-0.81) {};
\node[enclosed, label={left,yshift=-1mm: $v_2$}] (v2) at (-0.59,-0.81) {};
\node[enclosed, label={left,yshift=1mm: $v_1$}] (v1) at (-0.95,0.31) {};
\node[enclosed, label={below: $x$}] (x) at (0,-1.6) {};
\node[enclosed, label={right,yshift=-1mm: $y$}] (y) at (1.55,-0.48) {};

\draw (v) -- (v1) {};
\draw (v) -- (v2) {};
\draw (v) -- (v3) {};
\draw (v) -- (v4) {};
\draw (v) -- (v5) {};
\draw (v1) -- (v2) -- (v3) -- (v4) -- (v5) -- (v1) {};
\draw (v2) -- (x) -- (v3) -- (y) -- (v4) {};
\draw (v3) -- ++(306:5mm) {};
\end{tikzpicture}
    \label{not4coloursfig}
    \caption{The case $|c(N(v))| = 4$. All edges from $v$ and $v_3$ are shown.}
\end{figure}
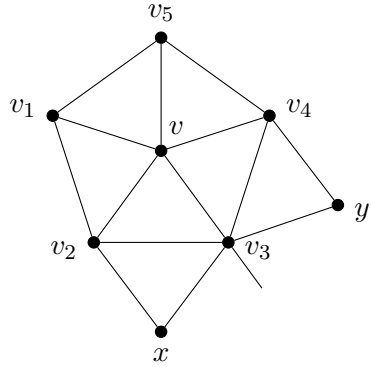

Suppose first that $v_2$ and $v_4$ are both $6$-vertices. Then the colours in $N(v_2)\setminus\{v\}$ are $1,1,3,3,2'$ in some order, so $c(x) \in \{1,2'\}$. Similarly $c(y) \in \{5,4'\}$. But now $v_3$ cannot forbid a new colour $3'$ by oddness. So at least one of $v_2$ and $v_4$ must be an $8^+$-vertex. Without loss of generality it is $v_2$.

Note that $v_2$ must send charge exactly $\f{1}{2}$ to $v$, since otherwise $v$ would receive total charge greater than $1$. The only way this can happen is if $v_2$ has degree exactly $8$ and the neighbours of $v_2$ are alternately $5$-vertices and $6^+$-vertices. This implies that $x$ is a $5$-vertex.

Now consider the restriction $c'$ of $c$ to $G\setminus\{v,v_3\}$. We will extend this to a nice colouring of $G$. First we assign the colour $3$ to $v$. This produces a proper partial colouring that is odd at $v_1$ and $v_5$; it is also odd at $v_3$, since $v_3$ has exactly one neighbour of colour $3$. Since $v_3$ forbids a colour by oddness at $v$ in the colouring $c$, we must have $|c(N(v_3)\setminus\{v\})| \leq 3$, and hence after colouring $v$ we have used at most $4$ colours in $N(v_3)$. In addition, $v$ and $x$ both have odd degree, so there at most $4$ colours forbidden by oddness at $v_3$. There is therefore a colour left over to use at $v_3$, and we are done.
\end{proof}

We are now ready to finish the proof for the case $\delta(G) = 5$. Let $c$ be a nice colouring of $G\setminus\{v\}$. Since we know $|c(N(v))| = 5$, the $v_i$ must together forbid the remaining $4$ colours by oddness. Therefore, without loss of generality, we have that for $i = 1,2,4,5$, $v_i$ forbids a colour $i' \notin \{1,2,3,4,5\}$, where all the $i'$ are distinct. This implies that all the $v_i$ except possibly $v_3$ have even degree.

If the external face along $v_5v_1$ is a $3$-face and $v_1$ and $v_5$ are both $6$-vertices, then Lemma \ref{4vlemma} applied to $\{v_4, v_5, v_1, v_2\}$ gives a contradiction, since $1'$ and $5'$ are distinct from each other and all the other colours. Therefore either the external face along $v_5v_1$ is a $4^+$-face, or one of $v_1$ and $v_5$ is an $8^+$-vertex. In either case, charge at least $\f{1}{2}$ is sent to $v$.

We can similarly apply Lemma \ref{4vlemma} to $\{v_3, v_4, v_5, v_1\}$ and $\{v_5, v_1, v_2, v_3\}$. This shows that either the external face along $v_4v_5$ is a $4^+$-face or one of $v_4$ and $v_5$ is an $8^+$-vertex, and that either the external face along $v_1v_2$ is a $4^+$-face or one of $v_1$ and $v_2$ is an $8^+$-vertex.

In particular, if both $v_1$ and $v_5$ are $6$-vertices, then this implies that each of the edges $v_1v_2$ and $v_4v_5$ must either have an external $4^+$-face or contain an $8^+$-vertex. Together, these send charge at least $1$ to $v$, implying that the external face along $v_1v_5$ cannot be a $4^+$-face. But this is a contradiction, since we have just shown that if $v_1$ and $v_5$ are both $6$-vertices then the external face along $v_5v_1$ is a $4^+$-face. Hence at least one of $v_1$ and $v_5$ is an $8^+$-vertex.

Suppose without loss of generality that $v_1$ is an $8^+$-vertex. We still require that either the external face along $v_4v_5$ is a $4^+$-face or one of $v_4$ and $v_5$ is an $8^+$-vertex. In either case, $v$ receives total charge at least $1$. As in the proof of Proposition \ref{not4colours}, in order for the total charge to be exactly $1$, we must have that $v_1$ has degree exactly $8$, $v_2$ is a $6$-vertex, and the external face along $v_1v_2$ is a $3$-face. Let $x$ be the common neighbour of $v_1$ and $v_2$ on this face. Again, as in the proof of Proposition \ref{not4colours}, the neighbours of $v_1$ must alternate between $5$-vertices and $6^+$-vertices, so $x$ has degree $5$.

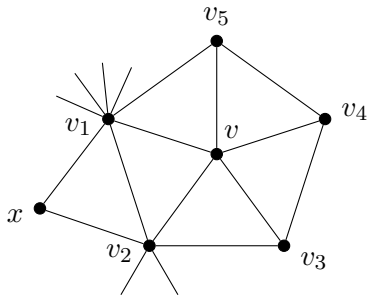
\begin{figure}[ht]
    \centering
\begin{tikzpicture}
[scale=1.5]
\tikzset{enclosed/.style={draw, circle, inner sep=0pt, minimum size=.15cm, fill=black}}
\node[enclosed, label={above,xshift=2mm: $v$}] (v) at (0,0) {};
\node[enclosed, label={above: $v_5$}] (v5) at (0,1) {};
\node[enclosed, label={right,yshift=1mm: $v_4$}] (v4) at (0.95,0.31) {};
\node[enclosed, label={right,yshift=-2mm: $v_3$}] (v3) at (0.59,-0.81) {};
\node[enclosed, label={left,yshift=-1mm: $v_2$}] (v2) at (-0.59,-0.81) {};
\node[enclosed, label={left,yshift=-1mm: $v_1$}] (v1) at (-0.95,0.31) {};
\node[enclosed, label={left,yshift=-1mm: $x$}] (x) at (-1.55,-0.48) {};

\draw (v) -- (v1) {};
\draw (v) -- (v2) {};
\draw (v) -- (v3) {};
\draw (v) -- (v4) {};
\draw (v) -- (v5) {};
\draw (v1) -- (v2) -- (v3) -- (v4) -- (v5) -- (v1) {};
\draw (v1) -- (x) -- (v2) {};
\draw (v2) -- ++(240:5mm) {};
\draw (v2) -- ++(300:5mm) {};
\draw (v1) -- ++(66:5mm) {};
\draw (v1) -- ++(96:5mm) {};
\draw (v1) -- ++(126:5mm) {};
\draw (v1) -- ++(156:5mm) {};
\end{tikzpicture}
    \label{finalcasefig}
    \caption{The case $|c(N(v))| = 5$, where $v_1$ is an $8$-vertex without loss of generality. All edges from $v$, $v_1$ and $v_2$ are shown.}
\end{figure}

Now we restrict $c$ to a colouring $c'$ of $G\setminus\{v,v_2\}$, and extend this to a nice colouring of $G$. First we assign colour $2$ to $v$. This produces a proper partial colouring which is odd at $v_4$ and $v_5$. The vertices of $N(v_2)\setminus\{v\}$ have colours $1,1,3,3,2'$, so after colouring $v$, we have that $v$ is the only neighbour of $v_2$ with colour $2$, and hence the colouring is odd at $v_2$. In addition, there are $4$ colours forbidden at $v_2$ by properness, and since $v$ and $x$ have odd degree, there are at most $4$ colours forbidden by oddness. Hence there is a colour left over, which we use to colour $v_2$. This completes the proof for the case $\delta(G) = 5$.

\section{The case $\delta(G) = 6$}\label{mindeg6}

We are now left with the case $\delta(G) = 6$, which, as discussed earlier, corresponds to the case where $G$ is a $6$-regular triangulation of the torus. Such triangulations were classified by Altshuler \cite{altshuler}. We will use the notation of Balachandran and Sankarnarayanan \cite{bs21}. For $m, n \geq 1$ and $0 \leq t < n$, we define the graph $H = T(m,n,t)$ on vertex set $V(H) = \{(i,j): 1 \leq i \leq m, 1 \leq j \leq n\}$ as follows:

\begin{itemize}
    \item $(i,j) \sim (i,j+1)$ for all $i,j$,
    \item $(i,j) \sim (i+1,j),(i+1,j-1)$ for $1 \leq i < m$ and all $j$,
    \item $(m,j) \sim (1,j-t),(1,j-t-1)$ for all $j$.
\end{itemize}
The addition in the second co-ordinate is modulo $n$ above and throughout this section.

In other words, we begin with a grid graph of dimensions $(m+1) \times (n+1)$ and triangulate it. We then identify the top and bottom rows, and we identify the leftmost and rightmost columns with a shift of $t$ vertices.

\begin{figure}[ht]
    \centering
\begin{tikzpicture}[scale=0.8]
\tikzset{enclosed/.style={draw, circle, inner sep=0pt, minimum size=.15cm, fill=black}}
\draw (1,1) grid (5,7);
\foreach \x in {1,...,5} {%
    \foreach \y in {1,...,7} {%
        \node[enclosed] () at (\x,\y) {};
        }%
    }%
\foreach \y in {1,...,6} {%
    \node[left] at (1,\y) {\footnotesize{(1,\y)}};
    }%
\foreach \x in {2,3,4} {%
    \node[below] at (\x,1) {\footnotesize{(\x,1)}};
    \node[above] at (\x,7) {\footnotesize{(\x,1)}};
    }%
\node[left] at (1,7) {\footnotesize{(1,1)}};
\node[right] at (5,1) {\footnotesize{(1,3)}};
\node[right] at (5,2) {\footnotesize{(1,4)}};
\node[right] at (5,3) {\footnotesize{(1,5)}};
\node[right] at (5,4) {\footnotesize{(1,6)}};
\node[right] at (5,5) {\footnotesize{(1,1)}};
\node[right] at (5,6) {\footnotesize{(1,2)}};
\node[right] at (5,7) {\footnotesize{(1,3)}};

\draw (2,1) -- (1,2);
\draw (3,1) -- (1,3);
\draw (4,1) -- (1,4);
\draw (5,1) -- (1,5);
\draw (5,2) -- (1,6);
\draw (5,3) -- (1,7);
\draw (5,4) -- (2,7);
\draw (5,5) -- (3,7);
\draw (5,6) -- (4,7);
\end{tikzpicture}
    \caption{A diagram of $T(4,6,4)$. The first row and column are shown twice.}
    \label{torusexample}
\end{figure}
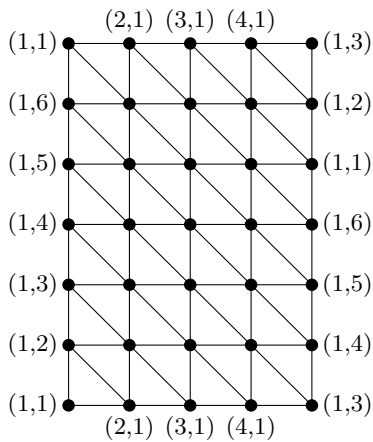

Note that two graphs $T(m,n,t)$ and $T(m',n',t')$ with different parameters can be isomorphic, and that this construction does not always produce a simple graph, although we are only concerned with the cases where it does.

\begin{thm}[Altshuler]\label{altshulerthm}
Every $6$-regular triangulation of the torus is isomorphic to $T(m,n,t)$ for some $m,n,t$.
\end{thm}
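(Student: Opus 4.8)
The plan is to pass to the universal cover, recognise the lifted triangulation as the standard triangular lattice, and then identify the original graph with the quotient of that lattice by a group of translations; the classification of such quotients is exactly the classification of finite-index sublattices of $\Z^2$ via Hermite normal form.

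Concretely, let $H$ be a $6$-regular triangulation of the torus $\mathbb{T}$, viewed as a simplicial CW-structure, and let $p\colon \R^2 \to \mathbb{T}$ be the universal covering, so that the deck group is $\pi_1(\mathbb{T}) \cong \Z^2$. Pulling the cell structure back along $p$ yields a locally finite triangulation $\widetilde H$ of $\R^2$; since $p$ is a local homeomorphism, every vertex of $\widetilde H$ again has degree $6$ and every face is a triangle.

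The substantive step is the rigidity statement: any triangulation of the plane in which every vertex has degree exactly $6$ is isomorphic to the standard triangular lattice $T$. I would prove this by building combinatorial spheres outward from a fixed vertex $o$: writing $S_k$ for the set of vertices at graph-distance $k$ from $o$, the degree-$6$ and all-triangle conditions force $S_1$ to be a $6$-cycle and, inductively, force each $S_k$ to be a cycle of length $6k$ bounding a disc combinatorially identical to the corresponding disc of $T$, with the ring of triangles between $S_k$ and $S_{k+1}$ uniquely determined. This yields an isomorphism $\widetilde H \cong T$ and, fixing it, identifies the vertices of $\widetilde H$ with the points of the triangular lattice, whose two lattice generators $e_1,e_2$ we take to be two of the three edge-directions. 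This inductive rigidity argument is the main obstacle: it is elementary but requires care to verify that no ``defect'' can occur, i.e.\ that the local degree condition really does propagate globally.

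Finally I would analyse the deck group and normalise. The deck group $\Gamma \cong \Z^2$ acts on $\widetilde H$ by simplicial automorphisms; under the identification $\widetilde H \cong T$ it becomes a subgroup of the automorphism group of $T$, which is realised by the planar symmetry group of the triangular lattice. Because $\mathbb{T}$ is orientable and $p$ is orientation-preserving, $\Gamma$ consists of orientation-preserving symmetries, and because $\Gamma \cong \Z^2$ is torsion-free it contains no nontrivial rotation (every rotational symmetry of $T$ has finite order). Hence every nontrivial element of $\Gamma$ is a translation, so $\Gamma$ is a rank-$2$ sublattice $\Lambda$ of the translation lattice $\Z e_1 \oplus \Z e_2$ of $T$, of finite index equal to $|V(H)|$. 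Every such sublattice has a unique basis in Hermite normal form $\{\,n e_1,\; t e_1 + m e_2\,\}$ with $m,n \geq 1$ and $0 \le t < n$. Taking $e_1$ to be the edge-direction along which $(i,j)\sim(i,j+1)$ runs, the quotient $T/\Lambda = \widetilde H / \Gamma \cong H$ is precisely $T(m,n,t)$: the generator $n e_1$ closes each straight line in the $e_1$-direction into an $n$-cycle, giving $(i,j)\sim(i,j+1)$ with $j$ read modulo $n$, while the generator $t e_1 + m e_2$ records that advancing through $m$ successive columns returns to the starting column shifted by $t$, which is exactly the wrap-around rule $(m,j)\sim(1,j-t),(1,j-t-1)$. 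This identifies $H$ with $T(m,n,t)$ and completes the proof.
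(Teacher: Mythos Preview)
The paper does not prove this theorem; it is quoted from Altshuler \cite{altshuler} and used as a black box in Section~\ref{mindeg6}. So there is no ``paper's own proof'' to compare against.

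Your outline is a valid and standard route to the result. The universal-cover-plus-Hermite-normal-form argument is correct, and the identification of the quotient with $T(m,n,t)$ at the end is accurate. The one place that needs real work is exactly where you say it does: the rigidity of the $6$-regular triangulation of the plane. Your sphere-by-sphere induction is the right idea, but the inductive step is not quite as automatic as the sketch suggests. One must distinguish the six ``corner'' vertices of $S_k$ from the ``side'' vertices (they have different numbers of neighbours in $S_{k-1}$, $S_k$, and $S_{k+1}$), verify that the degree-$6$ and all-triangles conditions force these counts, and check that no vertex of $S_{k+1}$ can coincide with another or with a vertex of some $S_j$ for $j\le k$ (this is where simple connectivity of the plane is actually used). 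None of this is hard, but it is where the content lies and it deserves more than a sentence.

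One small point on the deck-group step: you should also note that the combinatorial automorphism group of the triangular lattice coincides with its group of affine symmetries (this follows from the rigidity you have just established), so that ``orientation-preserving'' and ``torsion-free'' are meaningful constraints on $\Gamma$ as a subgroup of $\mathrm{Aut}(T)$. With that in hand, your conclusion that $\Gamma$ is a lattice of translations is correct.
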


To finish the proof of Theorem \ref{mainthm}, it therefore suffices to prove that every $T(m,n,t)$ that is a simple graph admits an odd colouring with at most $9$ colours. We do not claim that this is optimal; indeed, we believe that with more care it should be possible to show that $7$ colours will always suffice. We will consider three cases: $m \geq 3$, $m=2$ and $m=1$. Note that we must have $n \geq 3$, or $T(m,n,t)$ would not be simple.

We will find it useful to partition the $9$ available colours into three classes $C_1 = \{1,2,3\}$, $C_2 = \{4,5,6\}$, $C_3 = \{7,8,9\}$. In diagrams, the colours of $C_1$ will be represented by three shades of red, with $1$ being darkest and $3$ lightest. Similarly, $C_2$ will be represented by three shades of blue, and $C_3$ by three shades of green.

\subsection*{$m \geq 3$}\hfill\\
We begin by using only one colour class in each column: in column $i$ we will use the class $C_r$ where $r \equiv i \pmod 3$, unless $m \equiv 1 \pmod 3$, in which case we will use $C_2$ in column $m$. This ensures that the same class of colours is not used in two neighbouring columns. We call a column \emph{bad} if its two neighbouring columns use the same class; otherwise it is \emph{good}. If $m \equiv 0 \pmod 3$ then there are no bad columns. If $m \equiv 1 \pmod 3$ then columns $1$ and $m-1$ are bad, and if $m \equiv 2 \pmod 3$ then columns $1$ and $m$ are bad.

Next, we apply the same construction within each column: we colour $(i,j)$ with the colour $s$ in the correct class that satisfies $s \equiv j \pmod 3$, unless $n \equiv 1 \pmod 3$, in which case we use the colour with $s \equiv 2 \pmod 3$ at $(i,n)$. We call the resulting colouring $c$; note that this initial colouring does not depend on the value of $t$. Again, we call a row \emph{bad} if its two neighbouring rows use the same set of colours; otherwise it is \emph{good}. The same classification of bad rows applies as for bad columns, according to the value of $n$ modulo $3$.

This construction clearly produces a proper colouring, since $n \geq 3$. In addition, for any vertex $(i,j)$ that is not in a bad column, the colours of $(i+1,j)$ and $(i+1,j-1)$ appear exactly once in its neighbourhood. If $(i,j)$ is in a bad column but not in a bad row, then the colours of $(i,j+1)$ and $(i,j-1)$ appear exactly once in its neighbourhood. Therefore the only vertices at which the colouring $c$ may not be odd are those that are both in a bad column and a bad row. These only exist if neither $m$ nor $n$ is $0 \pmod 3$, and in that case there are exactly $4$ of them. We call these \emph{bad vertices}.

First consider the case where $m$ and $n$ are both $1 \pmod 3$. Columns $1$ and $m-1$ and rows $1$ and $n-1$ are bad. Now, for each bad vertex $v = (i,j)$, its neighbour $w = (i+1,j-1)$ is in a good column. Therefore the two neighbours of $w$ in column $i+2$ use colours from the one class that is not used at $v$ or $w$. There is thus one colour in this class that does not appear in $c(N(w))$, and we now recolour $w$ with that colour. We do this for all four bad vertices, creating a new colouring $c'$. Let $S$ be the set consisting of the four recoloured vertices. By construction, $c'$ is a proper colouring, since none of the four vertices of $S$ are adjacent.

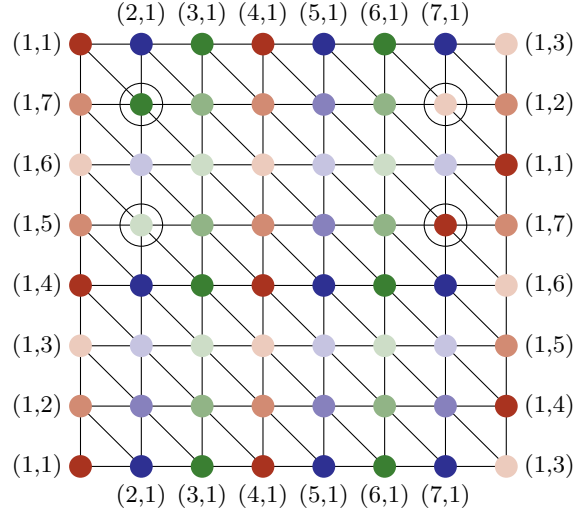
\begin{figure}[ht]
    \centering
\begin{tikzpicture}[scale=0.8]
\tikzset{col1/.style={fill, circle, inner sep=0pt, minimum size=3mm, color=Mahogany!100}}
\tikzset{col2/.style={fill, circle, inner sep=0pt, minimum size=3mm, color=Mahogany!50}}
\tikzset{col3/.style={fill, circle, inner sep=0pt, minimum size=3mm, color=Mahogany!20}}
\tikzset{col4/.style={fill, circle, inner sep=0pt, minimum size=3mm, color=Blue!100}}
\tikzset{col5/.style={fill, circle, inner sep=0pt, minimum size=3mm, color=Blue!50}}
\tikzset{col6/.style={fill, circle, inner sep=0pt, minimum size=3mm, color=Blue!20}}
\tikzset{col7/.style={fill, circle, inner sep=0pt, minimum size=3mm, color=OliveGreen!100}}
\tikzset{col8/.style={fill, circle, inner sep=0pt, minimum size=3mm, color=OliveGreen!50}}
\tikzset{col9/.style={fill, circle, inner sep=0pt, minimum size=3mm, color=OliveGreen!20}}

\foreach \y in {1,...,7} {%
    \node[left,xshift=-1mm] at (1,\y) {\footnotesize{(1,\y)}};
    }%
\foreach \y in {3,...,7} {%
    \node[right,xshift=1mm] at (8,\y-2) {\footnotesize{(1,\y)}};
    }%
\foreach \y in {1,2,3} {%
    \node[right,xshift=1mm] at (8,\y+5) {\footnotesize{(1,\y)}};
    }%
\node[left,xshift=-1mm] at (1,8) {\footnotesize{(1,1)}};
\foreach \x in {2,...,7} {%
    \node[below,yshift=-1mm] at (\x,1) {\footnotesize{(\x,1)}};
    \node[above,yshift=1mm] at (\x,8) {\footnotesize{(\x,1)}};
    }%

\draw (1,1) grid (8,8);
\draw (2,1) -- (1,2);
\draw (3,1) -- (1,3);
\draw (4,1) -- (1,4);
\draw (5,1) -- (1,5);
\draw (6,1) -- (1,6);
\draw (7,1) -- (1,7);
\draw (8,1) -- (1,8);
\draw (8,2) -- (2,8);
\draw (8,3) -- (3,8);
\draw (8,4) -- (4,8);
\draw (8,5) -- (5,8);
\draw (8,6) -- (6,8);
\draw (8,7) -- (7,8);

\draw (2,5) circle (0.35);
\draw (2,7) circle (0.35);
\draw (7,5) circle (0.35);
\draw (7,7) circle (0.35);

\node[col1] () at (1,1) {};
\node[col1] () at (1,4) {};
\node[col1] () at (1,8) {};
\node[col2] () at (1,2) {};
\node[col2] () at (1,5) {};
\node[col2] () at (1,7) {};
\node[col3] () at (1,3) {};
\node[col3] () at (1,6) {};
\node[col4] () at (2,1) {};
\node[col4] () at (2,4) {};
\node[col4] () at (2,8) {};
\node[col5] () at (2,2) {};
\node[col9] () at (2,5) {};
\node[col7] () at (2,7) {};
\node[col6] () at (2,3) {};
\node[col6] () at (2,6) {};
\node[col7] () at (3,1) {};
\node[col7] () at (3,4) {};
\node[col7] () at (3,8) {};
\node[col8] () at (3,2) {};
\node[col8] () at (3,5) {};
\node[col8] () at (3,7) {};
\node[col9] () at (3,3) {};
\node[col9] () at (3,6) {};
\node[col1] () at (4,1) {};
\node[col1] () at (4,4) {};
\node[col1] () at (4,8) {};
\node[col2] () at (4,2) {};
\node[col2] () at (4,5) {};
\node[col2] () at (4,7) {};
\node[col3] () at (4,3) {};
\node[col3] () at (4,6) {};
\node[col4] () at (5,1) {};
\node[col4] () at (5,4) {};
\node[col4] () at (5,8) {};
\node[col5] () at (5,2) {};
\node[col5] () at (5,5) {};
\node[col5] () at (5,7) {};
\node[col6] () at (5,3) {};
\node[col6] () at (5,6) {};
\node[col7] () at (6,1) {};
\node[col7] () at (6,4) {};
\node[col7] () at (6,8) {};
\node[col8] () at (6,2) {};
\node[col8] () at (6,5) {};
\node[col8] () at (6,7) {};
\node[col9] () at (6,3) {};
\node[col9] () at (6,6) {};
\node[col4] () at (7,1) {};
\node[col4] () at (7,4) {};
\node[col4] () at (7,8) {};
\node[col5] () at (7,2) {};
\node[col1] () at (7,5) {};
\node[col3] () at (7,7) {};
\node[col6] () at (7,3) {};
\node[col6] () at (7,6) {};
\node[col1] () at (8,6) {};
\node[col1] () at (8,2) {};
\node[col2] () at (8,7) {};
\node[col2] () at (8,3) {};
\node[col2] () at (8,5) {};
\node[col3] () at (8,1) {};
\node[col3] () at (8,4) {};
\node[col3] () at (8,8) {};
\end{tikzpicture}
    \caption{The case $m \equiv 1 \pmod 3$, $n \equiv 1 \pmod 3$, illustrated by $T(7,7,5)$. The vertices of $S$ are circled.}
    \label{m1n1}
\end{figure}

We now need to check that $c'$ is an odd colouring. First, observe that $c$ is odd at any vertex that has no neighbours in $S$, and therefore $c'$ is also odd at all such vertices. Note that in $c$, every vertex has an even number of neighbours in each colour class $C_i$. Therefore any vertex $v$ that is adjacent to exactly one vertex of $S$ has an odd number of neighbours in some colour class, and therefore $c'$ is odd at $v$.

There is no vertex adjacent to more than two vertices in $S$, so we are left to consider vertices with exactly two neighbours in $S$. There are two types of such vertices. First, there are vertices $v$ in bad columns with one neighbour in $S$ in each adjacent column. But then $v$ has exactly three neighbours in its own colour class in $c'$, so $c'$ is odd at $v$. Secondly, there are vertices $v = (i,j)$ in good columns where both $(i,j+1)$ and $(i,j-1)$ are in $S$. But in this case column $i-1$ is bad, and the two neighbours of $v$ in this column have distinct colours, each of which only appears once in $N(v)$. Thus $c'$ is an odd colouring.

Next we have the case $m \equiv 1 \pmod 3$, $n \equiv 2 \pmod 3$. The bad rows are rows $1$ and $n$, so $u = (2,n)$ is adjacent to the two bad vertices in column $1$ and $w = (m,n)$ is adjacent to the two bad vertices in column $m-1$. We recolour each of these vertices similarly to the previous case: we assign colour $9$ to $u$ since $9 \notin c(N(u))$, and recolour $w$ with whichever colour in $C_1$ does not appear in $c(N(w))$. This produces a proper colouring $c'$. The proof that $c'$ is an odd colouring proceeds just as in the $n \equiv 1 \pmod 3$ case, though it is simpler since only a vertex in a bad column can be adjacent to both $u$ and $v$.

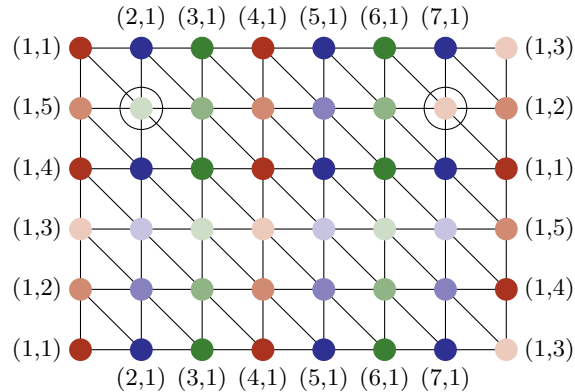
\begin{figure}[ht]
    \centering
\begin{tikzpicture}[scale=0.8]
\tikzset{col1/.style={fill, circle, inner sep=0pt, minimum size=3mm, color=Mahogany!100}}
\tikzset{col2/.style={fill, circle, inner sep=0pt, minimum size=3mm, color=Mahogany!50}}
\tikzset{col3/.style={fill, circle, inner sep=0pt, minimum size=3mm, color=Mahogany!20}}
\tikzset{col4/.style={fill, circle, inner sep=0pt, minimum size=3mm, color=Blue!100}}
\tikzset{col5/.style={fill, circle, inner sep=0pt, minimum size=3mm, color=Blue!50}}
\tikzset{col6/.style={fill, circle, inner sep=0pt, minimum size=3mm, color=Blue!20}}
\tikzset{col7/.style={fill, circle, inner sep=0pt, minimum size=3mm, color=OliveGreen!100}}
\tikzset{col8/.style={fill, circle, inner sep=0pt, minimum size=3mm, color=OliveGreen!50}}
\tikzset{col9/.style={fill, circle, inner sep=0pt, minimum size=3mm, color=OliveGreen!20}}

\foreach \y in {1,...,5} {%
    \node[left,xshift=-1mm] at (1,\y) {\footnotesize{(1,\y)}};
    }%
\foreach \y in {3,...,5} {%
    \node[right,xshift=1mm] at (8,\y-2) {\footnotesize{(1,\y)}};
    }%
\foreach \y in {1,2,3} {%
    \node[right,xshift=1mm] at (8,\y+3) {\footnotesize{(1,\y)}};
    }%
\node[left,xshift=-1mm] at (1,6) {\footnotesize{(1,1)}};
\foreach \x in {2,...,7} {%
    \node[below,yshift=-1mm] at (\x,1) {\footnotesize{(\x,1)}};
    \node[above,yshift=1mm] at (\x,6) {\footnotesize{(\x,1)}};
    }%

\draw (1,1) grid (8,6);
\draw (2,1) -- (1,2);
\draw (3,1) -- (1,3);
\draw (4,1) -- (1,4);
\draw (5,1) -- (1,5);
\draw (6,1) -- (1,6);
\draw (7,1) -- (2,6);
\draw (8,1) -- (3,6);
\draw (8,2) -- (4,6);
\draw (8,3) -- (5,6);
\draw (8,4) -- (6,6);
\draw (8,5) -- (7,6);

\draw (2,5) circle (0.35);
\draw (7,5) circle (0.35);

\node[col1] at (1,1) {};
\node[col1] at (1,4) {};
\node[col1] at (1,6) {};
\node[col2] at (1,2) {};
\node[col2] at (1,5) {};
\node[col3] at (1,3) {};
\node[col4] at (2,1) {};
\node[col4] at (2,4) {};
\node[col4] at (2,6) {};
\node[col5] at (2,2) {};
\node[col9] at (2,5) {};
\node[col6] at (2,3) {};
\node[col7] at (3,1) {};
\node[col7] at (3,4) {};
\node[col7] at (3,6) {};
\node[col8] at (3,2) {};
\node[col8] at (3,5) {};
\node[col9] at (3,3) {};
\node[col1] at (4,1) {};
\node[col1] at (4,4) {};
\node[col1] at (4,6) {};
\node[col2] at (4,2) {};
\node[col2] at (4,5) {};
\node[col3] at (4,3) {};
\node[col4] at (5,1) {};
\node[col4] at (5,4) {};
\node[col4] at (5,6) {};
\node[col5] at (5,2) {};
\node[col5] at (5,5) {};
\node[col6] at (5,3) {};
\node[col7] at (6,1) {};
\node[col7] at (6,4) {};
\node[col7] at (6,6) {};
\node[col8] at (6,2) {};
\node[col8] at (6,5) {};
\node[col9] at (6,3) {};
\node[col4] at (7,1) {};
\node[col4] at (7,4) {};
\node[col4] at (7,6) {};
\node[col5] at (7,2) {};
\node[col3] at (7,5) {};
\node[col6] at (7,3) {};
\node[col1] at (8,4) {};
\node[col1] at (8,2) {};
\node[col2] at (8,5) {};
\node[col2] at (8,3) {};
\node[col3] at (8,1) {};
\node[col3] at (8,6) {};
\end{tikzpicture}
    \caption{The case $m \equiv 1 \pmod 3$, $n \equiv 2 \pmod 3$, illustrated by $T(7,5,3)$. The two recoloured vertices $u$ and $w$ are circled.}
    \label{m1n2}
\end{figure}

We now move on to $m \equiv 2 \pmod 3$, with bad columns $1$ and $m$. Suppose that $n \equiv 1 \pmod 3$, so that rows $1$ and $n-1$ are bad. We recolour vertices as follows: $c'((2,n)) = c'((m-1,2)) = 7$, $c'((2,n-2)) = c'((m-1,n)) = 9$. Elsewhere $c' = c$. Let the set of recoloured vertices be $S$. This creates a proper colouring, and the proof that $c'$ is odd runs the same as in the previous cases, but with one additional detail: if $m = 5$ then it is possible for a vertex $v$ in column $3$ to have two neighbours in $S$, one in column $2$ and one in column $4$. However, in this case, $v$ has exactly one neighbour in colour class $C_1$ and one in class $C_2$, so $c'$ is odd at $v$.

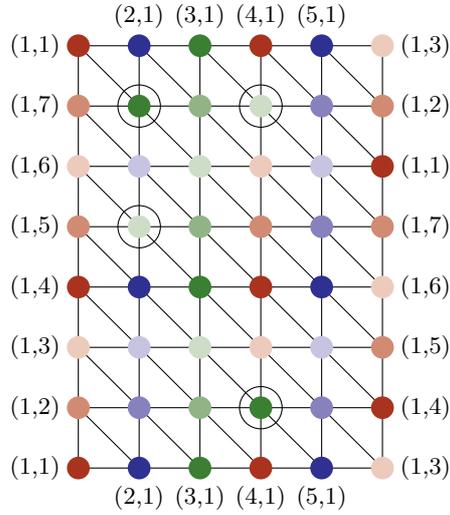
\begin{figure}[ht]
    \centering
\begin{tikzpicture}[scale=0.8]
\tikzset{col1/.style={fill, circle, inner sep=0pt, minimum size=3mm, color=Mahogany!100}}
\tikzset{col2/.style={fill, circle, inner sep=0pt, minimum size=3mm, color=Mahogany!50}}
\tikzset{col3/.style={fill, circle, inner sep=0pt, minimum size=3mm, color=Mahogany!20}}
\tikzset{col4/.style={fill, circle, inner sep=0pt, minimum size=3mm, color=Blue!100}}
\tikzset{col5/.style={fill, circle, inner sep=0pt, minimum size=3mm, color=Blue!50}}
\tikzset{col6/.style={fill, circle, inner sep=0pt, minimum size=3mm, color=Blue!20}}
\tikzset{col7/.style={fill, circle, inner sep=0pt, minimum size=3mm, color=OliveGreen!100}}
\tikzset{col8/.style={fill, circle, inner sep=0pt, minimum size=3mm, color=OliveGreen!50}}
\tikzset{col9/.style={fill, circle, inner sep=0pt, minimum size=3mm, color=OliveGreen!20}}

\foreach \y in {1,...,7} {%
    \node[left,xshift=-1mm] at (1,\y) {\footnotesize{(1,\y)}};
    }%
\foreach \y in {3,...,7} {%
    \node[right,xshift=1mm] at (6,\y-2) {\footnotesize{(1,\y)}};
    }%
\foreach \y in {1,2,3} {%
    \node[right,xshift=1mm] at (6,\y+5) {\footnotesize{(1,\y)}};
    }%
\node[left,xshift=-1mm] at (1,8) {\footnotesize{(1,1)}};
\foreach \x in {2,...,5} {%
    \node[below,yshift=-1mm] at (\x,1) {\footnotesize{(\x,1)}};
    \node[above,yshift=1mm] at (\x,8) {\footnotesize{(\x,1)}};
    }%

\draw (1,1) grid (6,8);
\draw (2,1) -- (1,2);
\draw (3,1) -- (1,3);
\draw (4,1) -- (1,4);
\draw (5,1) -- (1,5);
\draw (6,1) -- (1,6);
\draw (6,2) -- (1,7);
\draw (6,3) -- (1,8);
\draw (6,4) -- (2,8);
\draw (6,5) -- (3,8);
\draw (6,6) -- (4,8);
\draw (6,7) -- (5,8);

\draw (2,5) circle (0.35);
\draw (2,7) circle (0.35);
\draw (4,2) circle (0.35);
\draw (4,7) circle (0.35);

\node[col1] () at (1,1) {};
\node[col1] () at (1,4) {};
\node[col1] () at (1,8) {};
\node[col2] () at (1,2) {};
\node[col2] () at (1,5) {};
\node[col2] () at (1,7) {};
\node[col3] () at (1,3) {};
\node[col3] () at (1,6) {};
\node[col4] () at (2,1) {};
\node[col4] () at (2,4) {};
\node[col4] () at (2,8) {};
\node[col5] () at (2,2) {};
\node[col9] () at (2,5) {};
\node[col7] () at (2,7) {};
\node[col6] () at (2,3) {};
\node[col6] () at (2,6) {};
\node[col7] () at (3,1) {};
\node[col7] () at (3,4) {};
\node[col7] () at (3,8) {};
\node[col8] () at (3,2) {};
\node[col8] () at (3,5) {};
\node[col8] () at (3,7) {};
\node[col9] () at (3,3) {};
\node[col9] () at (3,6) {};
\node[col1] () at (4,1) {};
\node[col1] () at (4,4) {};
\node[col1] () at (4,8) {};
\node[col7] () at (4,2) {};
\node[col2] () at (4,5) {};
\node[col9] () at (4,7) {};
\node[col3] () at (4,3) {};
\node[col3] () at (4,6) {};
\node[col4] () at (5,1) {};
\node[col4] () at (5,4) {};
\node[col4] () at (5,8) {};
\node[col5] () at (5,2) {};
\node[col5] () at (5,5) {};
\node[col5] () at (5,7) {};
\node[col6] () at (5,3) {};
\node[col6] () at (5,6) {};
\node[col1] () at (6,6) {};
\node[col1] () at (6,2) {};
\node[col2] () at (6,7) {};
\node[col2] () at (6,3) {};
\node[col2] () at (6,5) {};
\node[col3] () at (6,1) {};
\node[col3] () at (6,4) {};
\node[col3] () at (6,8) {};
\end{tikzpicture}
    \caption{The case $m \equiv 2 \pmod 3$, $n \equiv 1 \pmod 3$, illustrated by $T(5,7,5)$. The vertices of $S$ are circled.}
    \label{m2n1}
\end{figure}

Finally, we have the case $m \equiv 2 \pmod 3$, $n \equiv 2 \pmod 3$. Now rows $1$ and $n$ are bad. We recolour $c'((2,n)) = c'((m-1,1)) = 9$ and otherwise leave $c$ unchanged. Once again, $c'$ is proper, and the proof that it is odd is the same as in the previous case. This completes the proof that $T(m,n,t)$ has a nice colouring for $m \geq 3$.

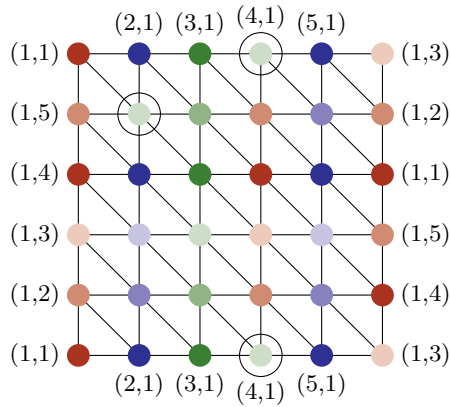
\begin{figure}[ht]
    \centering
\begin{tikzpicture}[scale=0.8]
\tikzset{col1/.style={fill, circle, inner sep=0pt, minimum size=3mm, color=Mahogany!100}}
\tikzset{col2/.style={fill, circle, inner sep=0pt, minimum size=3mm, color=Mahogany!50}}
\tikzset{col3/.style={fill, circle, inner sep=0pt, minimum size=3mm, color=Mahogany!20}}
\tikzset{col4/.style={fill, circle, inner sep=0pt, minimum size=3mm, color=Blue!100}}
\tikzset{col5/.style={fill, circle, inner sep=0pt, minimum size=3mm, color=Blue!50}}
\tikzset{col6/.style={fill, circle, inner sep=0pt, minimum size=3mm, color=Blue!20}}
\tikzset{col7/.style={fill, circle, inner sep=0pt, minimum size=3mm, color=OliveGreen!100}}
\tikzset{col8/.style={fill, circle, inner sep=0pt, minimum size=3mm, color=OliveGreen!50}}
\tikzset{col9/.style={fill, circle, inner sep=0pt, minimum size=3mm, color=OliveGreen!20}}

\foreach \y in {1,...,5} {%
    \node[left,xshift=-1mm] at (1,\y) {\footnotesize{(1,\y)}};
    }%
\foreach \y in {3,...,5} {%
    \node[right,xshift=1mm] at (6,\y-2) {\footnotesize{(1,\y)}};
    }%
\foreach \y in {1,2,3} {%
    \node[right,xshift=1mm] at (6,\y+3) {\footnotesize{(1,\y)}};
    }%
\node[left,xshift=-1mm] at (1,6) {\footnotesize{(1,1)}};
\foreach \x in {2,3,5} {%
    \node[below,yshift=-1mm] at (\x,1) {\footnotesize{(\x,1)}};
    \node[above,yshift=1mm] at (\x,6) {\footnotesize{(\x,1)}};
    }%
\node[below,yshift=-2mm] at (4,1) {\footnotesize{(4,1)}};
\node[above,yshift=2mm] at (4,6) {\footnotesize{(4,1)}};

\draw (1,1) grid (6,6);
\draw (2,1) -- (1,2);
\draw (3,1) -- (1,3);
\draw (4,1) -- (1,4);
\draw (5,1) -- (1,5);
\draw (6,1) -- (1,6);
\draw (6,2) -- (2,6);
\draw (6,3) -- (3,6);
\draw (6,4) -- (4,6);
\draw (6,5) -- (5,6);

\draw (2,5) circle (0.35);
\draw (4,1) circle (0.35);
\draw (4,6) circle (0.35);

\node[col1] at (1,1) {};
\node[col1] at (1,4) {};
\node[col1] at (1,6) {};
\node[col2] at (1,2) {};
\node[col2] at (1,5) {};
\node[col3] at (1,3) {};
\node[col4] at (2,1) {};
\node[col4] at (2,4) {};
\node[col4] at (2,6) {};
\node[col5] at (2,2) {};
\node[col9] at (2,5) {};
\node[col6] at (2,3) {};
\node[col7] at (3,1) {};
\node[col7] at (3,4) {};
\node[col7] at (3,6) {};
\node[col8] at (3,2) {};
\node[col8] at (3,5) {};
\node[col9] at (3,3) {};
\node[col9] at (4,1) {};
\node[col1] at (4,4) {};
\node[col9] at (4,6) {};
\node[col2] at (4,2) {};
\node[col2] at (4,5) {};
\node[col3] at (4,3) {};
\node[col4] at (5,1) {};
\node[col4] at (5,4) {};
\node[col4] at (5,6) {};
\node[col5] at (5,2) {};
\node[col5] at (5,5) {};
\node[col6] at (5,3) {};
\node[col1] at (6,4) {};
\node[col1] at (6,2) {};
\node[col2] at (6,5) {};
\node[col2] at (6,3) {};
\node[col3] at (6,1) {};
\node[col3] at (6,6) {};
\end{tikzpicture}
    \caption{The case $m \equiv 2 \pmod 3$, $n \equiv 2 \pmod 3$, illustrated by $T(5,5,3)$. The two recoloured vertices are circled; note that $(4,1)$ is shown twice.}
    \label{m2n2}
\end{figure}

\subsection*{$m = 2$}\hfill\\
We begin by colouring the vertices as in the $m \geq 3$ case: we use colours $\{1,2,3\}$ for column $1$ and $\{4,5,6\}$ for column 2, and $(i,j)$ is assigned the colour in the corresponding class that is equivalent to $j \pmod 3$, unless $n \equiv 1 \pmod 3$ in which case $(1,n)$ and $(2,n)$ receive colours $2$ and $5$ respectively. Let this colouring be $c$.

In the terminology of the $m \geq 3$ case, both columns are bad and there are at most two bad rows. If bad rows exist, then we can choose two good vertices $u$ and $w$ such that every bad vertex is adjacent to at least one of $u$ and $w$. We then recolour by using colours $7$ and $8$ at $u$ and $w$ respectively to create a new colouring $c'$. This colouring is trivially proper. It is also odd, since $c$ is odd at every vertex that is not adjacent to $u$ or $w$, and for every vertex that is adjacent to $u$ or $w$, either $7$ or $8$ appears exactly once in its neighbourhood in $c'$.

\begin{figure}[ht]
    \centering
\begin{tikzpicture}[scale=0.8]
\tikzset{col1/.style={fill, circle, inner sep=0pt, minimum size=3mm, color=Mahogany!100}}
\tikzset{col2/.style={fill, circle, inner sep=0pt, minimum size=3mm, color=Mahogany!50}}
\tikzset{col3/.style={fill, circle, inner sep=0pt, minimum size=3mm, color=Mahogany!20}}
\tikzset{col4/.style={fill, circle, inner sep=0pt, minimum size=3mm, color=Blue!100}}
\tikzset{col5/.style={fill, circle, inner sep=0pt, minimum size=3mm, color=Blue!50}}
\tikzset{col6/.style={fill, circle, inner sep=0pt, minimum size=3mm, color=Blue!20}}
\tikzset{col7/.style={fill, circle, inner sep=0pt, minimum size=3mm, color=OliveGreen!100}}
\tikzset{col8/.style={fill, circle, inner sep=0pt, minimum size=3mm, color=OliveGreen!50}}
\tikzset{col9/.style={fill, circle, inner sep=0pt, minimum size=3mm, color=OliveGreen!20}}

\draw (1,1) grid (3,6);
\draw (2,1) -- (1,2);
\draw (3,1) -- (1,3);
\draw (3,2) -- (1,4);
\draw (3,3) -- (1,5);
\draw (3,4) -- (1,6);
\draw (3,5) -- (2,6);

\draw (1,2) circle (0.35);
\draw (2,5) circle (0.35);
\draw (3,4) circle (0.35);

\foreach \y in {1,3,4,5} {%
    \node[left,xshift=-1mm] at (1,\y) {\footnotesize{(1,\y)}};
    }%
\node[left,xshift=-2.5mm] at (1,2) {\footnotesize{(1,2)}};
\node[left,xshift=-1mm] at (1,6) {\footnotesize{(1,1)}};
\node[below,yshift=-1mm] at (2,1) {\footnotesize{(2,1)}};
\node[above,yshift=1mm] at (2,6) {\footnotesize{(2,1)}};
\foreach \y in {1,3,4} {%
    \node[right,xshift=1mm] at (3,\y+2) {\footnotesize{(1,\y)}};
    }%
\node[right,xshift=2.5mm] at (3,4) {\footnotesize{(1,2)}};
\node[right,xshift=1mm] at (3,1) {\footnotesize{(1,4)}};
\node[right,xshift=1mm] at (3,2) {\footnotesize{(1,5)}};

\node[col1] at (1,1) {};
\node[col1] at (1,4) {};
\node[col1] at (1,6) {};
\node[col7] at (1,2) {};
\node[col2] at (1,5) {};
\node[col3] at (1,3) {};
\node[col4] at (2,1) {};
\node[col4] at (2,4) {};
\node[col4] at (2,6) {};
\node[col5] at (2,2) {};
\node[col8] at (2,5) {};
\node[col6] at (2,3) {};
\node[col1] at (3,3) {};
\node[col1] at (3,6) {};
\node[col1] at (3,1) {};
\node[col7] at (3,4) {};
\node[col2] at (3,2) {};
\node[col3] at (3,5) {};
\end{tikzpicture}
    \caption{$T(2,5,2)$ coloured as above. The two recoloured vertices are circled; note that $(1,2)$ is shown twice.}
    \label{m2case}
\end{figure}
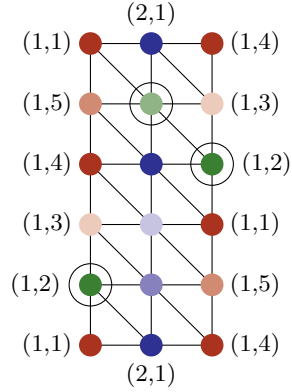

\subsection*{$m = 1$}\hfill\\
Let $H = T(1,n,t)$. We will refer to vertex $(1,j)$ as just $j$ for simplicity. Thus $j$ is adjacent to $j\pm 1$, $j \pm t$ and $j \pm (t+1)$, working modulo $n$. We can now see that $T(1,n,t)$ is isomorphic to $T(1,n,n-(t+1))$, and so $t < \f{n}{2}$ without loss of generality. Let $r = \left\lceil\f{n}{t}\right\rceil$, so that by the above we have $r \geq 3$. Note also that $t \geq 2$, otherwise the graph $T(1,n,t)$ would not be simple.

\begin{figure}[ht]
    \centering
\begin{tikzpicture}
\tikzset{enclosed/.style={draw, circle, inner sep=0pt, minimum size=1.5mm, fill=black}}
\pgfdeclarelayer{bg}
\pgfsetlayers{bg,main}

\foreach \x in {1,...,13} {%
    \begin{pgfonlayer}{bg}
    \draw[red] (360*\x/13+90:3cm) -- (360*4/13+360*\x/13+90:3cm);
    \draw[blue] (90+360*\x/13:3cm) -- (90+360*5/13+360*\x/13:3cm);
    \node at (90+360*\x/13:3.35cm) {\x};
    \end{pgfonlayer}{bg}
    \node[enclosed] (v\x) at (90+360*\x/13:3cm) {};
    }%
\draw (0,0) circle (3);
\end{tikzpicture}
    \caption{The graph $T(1,13,4)$. Edges $j \sim (j \pm 4)$ are shown in red and edges $j \sim (j \pm 5)$ in blue.}
    \label{m1example}
\end{figure}
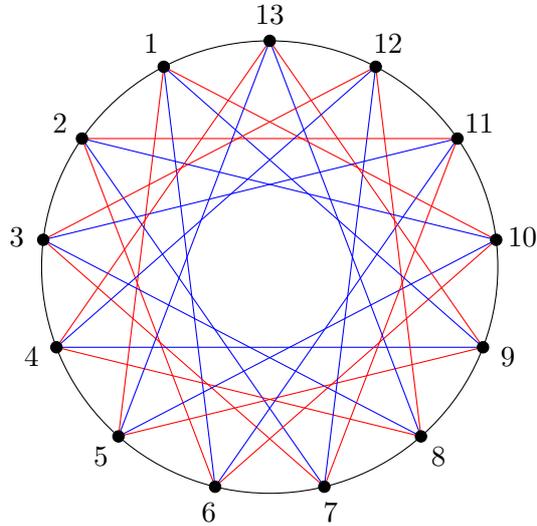

Now we partition the vertices of $T(1,n,t)$ into intervals: for $1 \leq k \leq \left\lfloor\f{n}{t}\right\rfloor$, let $I_k = \{j: (k-1)t+1 \leq j \leq kt\}$, and if $n$ is not a multiple of $t$, then $I_r = \{j: kt+1 \leq j \leq n\}$ consists of the remaining vertices. There are $r$ intervals in total.

As in the case $m \geq 3$, we split the colours into classes $C_1 = \{1,2,3\}$, $C_2 = \{4,5,6\}$ and $C_3 = \{7,8,9\}$. For each $C_i$ we now define the set of vertices $S_i$ at which $C_i$ will be used.

\begin{itemize}
    \item If $r \equiv 0 \pmod 3$, then $S_i = \cup_{k \equiv i \pmod 3}I_k$.
    \item If $r \equiv 1 \pmod 3$, then the $S_i$ are defined as for the $0 \pmod 3$ case except that $I_r$ is part of $S_2$ instead of $S_1$.
    \item If $r \equiv 2 \pmod 3$, then the $S_i$ are defined as for the $0 \pmod 3$ case except that $I_{r-1}$ is part of $S_2$ and $I_r$ is part of $S_3$.
\end{itemize}

\begin{figure}[ht]
    \centering
\begin{tikzpicture}
\draw (0,0) circle (1.5);
\draw (-4.5,0) circle (1.5);
\draw (4.5,0) circle (1.5);

\draw (-4.5,0) +(90:1.3) -- +(90:1.7) +(190:1.3) -- +(190:1.7) +(290:1.3) -- +(290:1.7) +(30:1.3) -- +(30:1.7);
\node (A) at (-4.5,0) {};
\path (A) ++(140:1.2) node () {$I_1$};
\path (A) ++(240:1.2) node () {$I_2$};
\path (A) ++(340:1.2) node () {$I_3$};
\path (A) ++(60:1.2) node () {$I_4$};
\path (A) ++(140:1.8) node () {\textcolor{Mahogany}{$S_1$}};
\path (A) ++(240:1.8) node () {\textcolor{Blue}{$S_2$}};
\path (A) ++(340:1.8) node () {\textcolor{OliveGreen}{$S_3$}};
\path (A) ++(60:1.8) node () {\textcolor{Blue}{$S_2$}};
\node at (-4.5,-2.2) {$r \equiv 1 \pmod 3$};

\draw (90:1.3) -- (90:1.7) (170:1.3) -- (170:1.7) (250:1.3) -- (250:1.7) (330:1.3) -- (330:1.7) (50:1.3) -- (50:1.7);
\node at (130:1.2) {$I_1$};
\node at (210:1.2) {$I_2$};
\node at (290:1.2) {$I_3$};
\node at (10:1.2) {$I_4$};
\node at (70:1.2) {$I_5$};
\node at (130:1.8) {\textcolor{Mahogany}{$S_1$}};
\node at (210:1.8) {\textcolor{Blue}{$S_2$}};
\node at (290:1.8) {\textcolor{OliveGreen}{$S_3$}};
\node at (10:1.8) {\textcolor{Blue}{$S_2$}};
\node at (70:1.8) {\textcolor{OliveGreen}{$S_3$}};
\node at (0,-2.2) {$r \equiv 2 \pmod 3$};

\draw (4.5,0) +(90:1.3) -- +(90:1.7) +(155:1.3) -- +(155:1.7) +(220:1.3) -- +(220:1.7) +(285:1.3) -- +(285:1.7) +(350:1.3) -- +(350:1.7) +(55:1.3) -- +(55:1.7);
\node (B) at (4.5,0) {};
\path (B) ++(122.5:1.2) node () {$I_1$};
\path (B) ++(187.5:1.2) node () {$I_2$};
\path (B) ++(252.5:1.2) node () {$I_3$};
\path (B) ++(317.5:1.2) node () {$I_4$};
\path (B) ++(22.5:1.2) node () {$I_5$};
\path (B) ++(72.5:1.2) node () {$I_6$};
\path (B) ++(122.5:1.8) node () {\textcolor{Mahogany}{$S_1$}};
\path (B) ++(187.5:1.8) node () {\textcolor{Blue}{$S_2$}};
\path (B) ++(252.5:1.8) node () {\textcolor{OliveGreen}{$S_3$}};
\path (B) ++(317.5:1.8) node () {\textcolor{Mahogany}{$S_1$}};
\path (B) ++(22.5:1.8) node () {\textcolor{Blue}{$S_2$}};
\path (B) ++(72.5:1.8) node () {\textcolor{OliveGreen}{$S_3$}};
\node at (4.5,-2.2) {$r \equiv 0 \pmod 3$};
\end{tikzpicture}
    \caption{The partition of $T(1,n,t)$ into intervals $I_i$ and subsets $S_i$. As in Figure \ref{m1example}, the graph is depicted as a circle.}
    \label{intervals}
\end{figure}
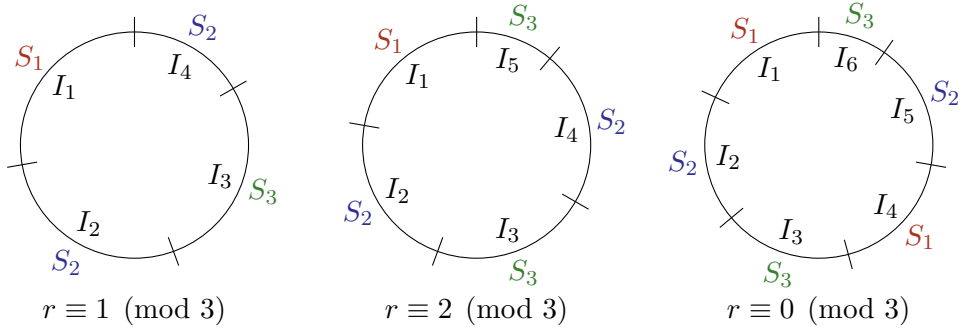

We now consider the induced graphs $H[S_i]$. The construction above ensures that the only edges between two vertices in the same interval are edges of the form $\{j,j+1\}$. In addition, the only edges between two distinct intervals in the same $S_i$ are edges of the form $\{kt,(k+1)t+1\}$ between the last vertex of one interval and the first vertex of another. This means that the induced graphs $H[S_i]$ are unions of disjoint paths, with the exception of the case $r = 4$, when $H[S_2]$ is a cycle. A path clearly has a proper odd $3$-colouring, so if $r \neq 4$ we can use the colours of $C_i$ to colour the vertices of each $S_i$ such that a proper odd colouring is induced on $H[S_i]$. This produces a nice colouring of $H$.

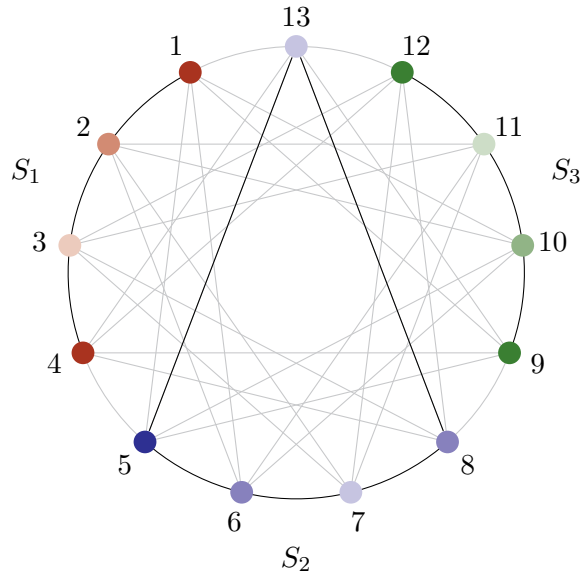
\begin{figure}[ht]
    \centering
\begin{tikzpicture}
\tikzset{enclosed/.style={draw, circle, inner sep=0pt, minimum size=1.5mm, fill=black}}
\tikzset{col1/.style={fill, circle, inner sep=0pt, minimum size=3mm, color=Mahogany!100}}
\tikzset{col2/.style={fill, circle, inner sep=0pt, minimum size=3mm, color=Mahogany!50}}
\tikzset{col3/.style={fill, circle, inner sep=0pt, minimum size=3mm, color=Mahogany!20}}
\tikzset{col4/.style={fill, circle, inner sep=0pt, minimum size=3mm, color=Blue!100}}
\tikzset{col5/.style={fill, circle, inner sep=0pt, minimum size=3mm, color=Blue!50}}
\tikzset{col6/.style={fill, circle, inner sep=0pt, minimum size=3mm, color=Blue!20}}
\tikzset{col7/.style={fill, circle, inner sep=0pt, minimum size=3mm, color=OliveGreen!100}}
\tikzset{col8/.style={fill, circle, inner sep=0pt, minimum size=3mm, color=OliveGreen!50}}
\tikzset{col9/.style={fill, circle, inner sep=0pt, minimum size=3mm, color=OliveGreen!20}}
\pgfdeclarelayer{bg}
\pgfsetlayers{bg,main}

\foreach \x in {1,...,13} {%
    \node at (90+360*\x/13:3.4) {\x};
    \begin{pgfonlayer}{bg}
    \draw[Gray!50] (360*\x/13+90:3cm) -- (360*4/13+360*\x/13+90:3cm);
    \draw[Gray!50] (90+360*\x/13:3cm) -- (90+360*5/13+360*\x/13:3cm);
    \end{pgfonlayer}{bg}
    }%
\draw (90+1*360/13:3) arc (90+1*360/13:90+4*360/13:3);
\draw (90+5*360/13:3) arc (90+5*360/13:90+8*360/13:3);
\draw (90-4*360/13:3) arc (90-4*360/13:90-1*360/13:3);
\draw[Gray!50] (90+4*360/13:3) arc (90+4*360/13:90+5*360/13:3);
\draw[Gray!50] (90+8*360/13:3) arc (90+8*360/13:90+9*360/13:3);
\draw[Gray!50] (90-1*360/13:3) arc (90-1*360/13:90+1*360/13:3);
\node[col1] at (90+1*360/13:3) {};
\node[col2] at (90+2*360/13:3) {};
\node[col3] at (90+3*360/13:3) {};
\node[col1] at (90+4*360/13:3) {};
\node[col4] (v5) at (90+5*360/13:3) {};
\node[col5] at (90+6*360/13:3) {};
\node[col6] at (90+7*360/13:3) {};
\node[col5] (v8) at (90+8*360/13:3) {};
\node[col7] at (90+9*360/13:3) {};
\node[col8] at (90-3*360/13:3) {};
\node[col9] at (90-2*360/13:3) {};
\node[col7] at (90-1*360/13:3) {};
\node[col6] (v13) at (90:3) {};
\node at (90+2.5*360/13:3.8) {$S_1$};
\node at (90+6.5*360/13:3.8) {$S_2$};
\node at (90-2.5*360/13:3.8) {$S_3$};

\draw (v5) -- (v13) -- (v8);

\end{tikzpicture}
    \caption{$T(1,13,4)$ coloured as below. Edges within the same $S_i$ are shown in black and edges between different $S_i$ in grey.}
    \label{finalexample}
\end{figure}

We are left with the case $r = 4$, where $3t+1 \leq n \leq 4t$. We begin by colouring the vertices of $S_1$ and $S_3$ using $C_1$ and $C_3$ respectively as in the previous case. Recall that $S_2$ consists of the union of $I_2 = \{j: t+1 \leq j \leq 2t\}$ and $I_4 = \{j: 3t+1 \leq j \leq n\}$. We properly $3$-colour the cycle $H[S_2]$ and apply the resulting colouring in $H$, giving no regard to oddness for the time being.

For $t+2 \leq j \leq 2t$, vertex $j$ is adjacent to exactly two vertices in $I_1$, and these are adjacent to each other and thus receive different colours from $C_1$. The colouring is therefore odd at $j$ for $t+2 \leq j \leq 2t$. Since $t \geq 2$, the same argument can be applied to vertex $t+1$ and interval $I_3$, showing that the colouring is odd at $j = t+1$ and therefore at every vertex in $I_2$.

If $n \geq 3t+2$, we can repeat this argument to show that the colouring is odd at every vertex of $I_4$, and therefore we have found a nice colouring of $H$. If instead $n = 3t+1$ then the argument breaks down at vertex $3t+1$. However, we know that the cycle $H[S_2]$ can be properly $3$-coloured in such a way that there are at most two vertices at which the colouring is not odd: indeed we made heavy use of the necessary constructions in the case $m \geq 3$. Clearly we can ensure that vertex $3t+1$ is not one of these vertices. Thus the colouring of $H[S_2]$ is odd at $3t+1$, and therefore the resulting colouring of $H$ is also odd at $3t+1$. This completes the proof of Theorem \ref{mainthm}.

\section{Conclusion}

First, we note that the proof of Theorem \ref{mainthm} also works for the real projective plane. Since the projective plane has Euler characteristic $1$, Euler's formula tells us that any graph $G$ on the projective plane has $\delta(G) \leq 5$, and that the total charge over all vertices and faces of $G$ in the discharging process is at most $-6$. Thus the discharging method alone suffices, and there is no special case such as the one dealt with in Section \ref{mindeg6}. We did not use any other properties of the torus that do not also hold for the real projective plane: like the torus, the projective plane is locally homeomorphic to the Euclidean plane, and orientation did not matter anywhere in our discharging proof. We therefore have the following result.

\begin{thm}\label{projplane}
Let $G$ be a simple graph that embeds in the real projective plane. Then $G$ has a proper odd colouring with at most $9$ colours.
\end{thm}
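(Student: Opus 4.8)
The plan is to observe that the whole argument of Section~\ref{discharging} is, by design, purely local and orientation-free, and that the only reason the torus required the separate treatment of Section~\ref{mindeg6} was the possibility $\delta(G) = 6$. On the projective plane this possibility is excluded by Euler's formula, so the discharging argument by itself closes the proof.

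Concretely, I would begin just as before: assume a projective-planar graph without a nice colouring exists and let $G$ be a minimal such graph. The crucial first step is the Euler-characteristic computation. Since the projective plane has Euler characteristic $1$, any embedding satisfies $|V(G)| - |E(G)| + |F(G)| \geq 1$, and combining this with $3|F(G)| \leq 2|E(G)|$ yields $|E(G)| \leq 3|V(G)| - 3$. Hence the average degree is strictly below $6$, so $\delta(G) \leq 5$; together with Claim~\ref{mindeg}---whose proof uses only minimality and local colour extension and so is unaffected by the change of surface---this forces $\delta(G) = 5$. The same computation gives total initial charge $-6\bigl(|V(G)| - |E(G)| + |F(G)|\bigr) \leq -6 < 0$.

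Next I would check that Claims~\ref{5neighbour} and~\ref{chargeswork} transfer verbatim: their proofs invoke only the cyclic order of the edges around a vertex and local pictures of neighbourhoods, both of which are available on any surface locally homeomorphic to the plane, and at no point is an orientation or a global homological feature of the torus used. Consequently, after discharging, every face and every $6^+$-vertex again has non-negative charge. As the total charge is strictly negative and only $5$-vertices can carry negative charge, some $5$-vertex $v$ must finish with charge at most $0$, i.e.\ receive total charge at most $1$.

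Finally I would feed this vertex $v$ into the structural analysis of Section~\ref{discharging} unchanged: Propositions~\ref{3faces} and~\ref{not4colours}, followed by the concluding case distinction, construct a nice colouring of $G$, contradicting minimality. The one point demanding genuine care---the main obstacle---is verifying that every local deduction in Section~\ref{discharging} really is orientation-independent and relies on nothing beyond local planarity. Once this is confirmed, the projective-plane case is strictly easier than the torus: the case $\delta(G) = 6$ simply cannot arise, so Altshuler's classification (Theorem~\ref{altshulerthm}) and the entirety of Section~\ref{mindeg6} are never needed.
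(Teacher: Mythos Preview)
Your proposal is correct and matches the paper's own argument essentially line for line: the paper likewise observes that on the projective plane Euler's formula forces $\delta(G)\le 5$ and total charge at most $-6$, that the discharging argument of Section~\ref{discharging} is local and orientation-free, and that therefore Section~\ref{mindeg6} is unnecessary. There is nothing to add.
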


For a surface $S$, let $\chi_o(S)$ denote the maximum value of $\chi_o(G)$ over all simple graphs that can be embedded in $S$. We have therefore shown that for the torus $T$, $$7 \leq \chi_o(T) \leq 9.$$
We believe that in fact $7$ colours will always suffice.

\begin{conj}\label{7conj}
Let $G$ be a toroidal graph. Then $\chi_o(G) \leq 7$.
\end{conj}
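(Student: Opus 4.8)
The plan is to mimic the minimal-counterexample-plus-discharging structure used for Theorem \ref{mainthm}, but to push every estimate much harder so that it survives the loss of two colours. Let $G$ be a minimal toroidal graph with no proper odd $7$-colouring. The first step is to redo the low-degree analysis of Claim \ref{mindeg}, which now behaves quite differently: since a vertex $v$ forbids at most $2d(v)$ colours, vertices of degree $1$, $2$ and $3$ remain reducible (for degree $2$ we still contract an incident edge, leaving at most $4$ forbidden colours), but a $4$-vertex can force all of up to $8$ colours and is no longer automatically reducible against a palette of size $7$. The core of this stage is therefore to prove a battery of sharper reducibility lemmas — in the spirit of Lemma \ref{4vlemma} and Lemma \ref{adjacent5} — showing that a $4$-vertex, and many local configurations of adjacent $5$- and $6$-vertices, can be recoloured or identified away whenever their neighbourhoods would otherwise exhaust the $7$ colours. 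I would aim to re-establish $\delta(G) \ge 5$ along with strong restrictions on how $4$- and $5$-vertices may be arranged.

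With a rich enough stock of reducible configurations, I would keep the standard charges $d(v) - 6$ on vertices and $2d(f) - 6$ on faces, whose total is at most $0$ by Euler's formula, and design a fresh rule set to force positive charge somewhere. The rules must be far more frugal than (R1)--(R4): because each neighbour of a low-degree vertex now forbids a larger share of the palette, the configurations that survive reduction are more constrained, so a deficient $5$-vertex must gather charge from a more tightly specified neighbourhood, very likely requiring information about its second neighbourhood. Eliminating $\delta(G) = 5$ in this regime is where I expect the real difficulty to lie, with a case analysis substantially larger than the one in Section \ref{discharging}.

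The case $\delta(G) = 6$, in which $G$ is a $6$-regular triangulation $T(m,n,t)$ by Theorem \ref{altshulerthm}, is more approachable and I would settle it first. The key observation is that the linear colouring $c(i,j) \equiv i + 3j \pmod 7$ is conflict-free: the six neighbours of $(i,j)$ receive the colours $c(i,j) + s$ for $s \in \{\pm 1, \pm 2, \pm 3\}$, so every vertex sees each of the other six colours exactly once. Such a linear form descends to $T(m,n,t)$ only when $7 \mid n$ (and then only when $m + 3t \equiv 0 \pmod 7$), so for most parameters one cannot avoid patching. For these I would adapt the construction of the $m \ge 3$ analysis in Section \ref{mindeg6}, now splitting the palette into classes of sizes $3$, $2$ and $2$, colouring columns (and then rows, or intervals) periodically by class, and recolouring a bounded number of bad vertices to repair oddness; the exceptional values $m \le 2$ and the smallest $n$ would be handled directly.

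The principal obstacle is thus not the triangulation case but the $\delta(G) = 5$ discharging argument under the tighter budget. Each delicate extension in Section \ref{discharging} relied on roughly one spare colour ($9$ available against at most $8$ forbidden); with $7$ colours that margin disappears, so essentially every such step must be upgraded to a genuine recolouring lemma, and the discharging retuned so that the deficiencies these lemmas introduce are still absorbed. Making a sufficiently large library of reducible configurations mesh with a correspondingly balanced discharging scheme is, I expect, the genuinely hard part, and is presumably why the bound $\chi_o(G) \le 7$ remains only conjectural.
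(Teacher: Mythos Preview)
The statement you were asked to prove is Conjecture \ref{7conj}, and the paper does not prove it: it is explicitly presented as an open problem, with the paper remarking that improving the bound below $9$ ``will likely require a new approach''. There is therefore no proof in the paper to compare your attempt against.

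Your proposal, to its credit, is honest about this: it is a strategy sketch rather than a proof, and you yourself conclude that the $\delta(G)=5$ discharging step ``is presumably why the bound $\chi_o(G)\le 7$ remains only conjectural''. But that means the proposal contains a genuine gap at exactly the crucial point. Concretely: with only $7$ colours, a $4$-vertex is no longer reducible by the simple counting of Claim \ref{mindeg}, so you cannot even get $\delta(G)\ge 5$ without new structural lemmas that you have not supplied; and in the $\delta(G)=5$ analysis, every extension step in Section \ref{discharging} that relied on ``at most $8$ colours forbidden, so one of the $9$ is free'' collapses, since now $8>7$. You correctly identify that each such step would need to be replaced by a real recolouring argument, but you do not provide any, nor do you give discharging rules that would balance against whatever reducible configurations you might find. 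The triangulation part of your plan is more concrete and your observation that $c(i,j)\equiv i+3j\pmod 7$ gives a conflict-free colouring when $7\mid n$ and $m+3t\equiv 0\pmod 7$ is correct and pleasant, but patching this to all $T(m,n,t)$ with a $3{+}2{+}2$ palette split is not obviously straightforward either, since a size-$2$ class cannot properly colour an odd path, let alone carry an odd colouring.

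In short: the paper offers no proof, and your proposal is a reasonable outline of where the difficulties lie but does not resolve any of them.
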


Improving the upper bound beyond $9$ will likely require a new approach. Note that the two proofs that $8$ colours suffice for planar graphs, by Petr and Portier \cite{pp22} and Fabrici et al.\ \cite{fabrici22}, both use the Four Colour Theorem, and the bound of $8$ arises specifically as twice the bound of $4$ for ordinary colourings. 

The same methods applied to the torus would only produce an upper bound of $14$ colours for odd colourings. Indeed, we obtain a bound of $\chi_o(S) \leq 2\chi(S)$ for a general surface $S$, where $\chi(S)$ is the maximum of $\chi(G)$ over all graphs that embed in $S$. Any improvement on this bound for general surfaces $S$ would be of interest. In particular, it is natural to ask the following:

\begin{question}
Does $\chi(S) = \chi_o(S)$ for every surface other than the plane?
\end{question}

\section{Acknowledgements}

The author would like to thank his PhD supervisor Professor B\'ela Bollob\'as for his helpful comments, and Jan Petr and Julien Portier for introducing him to the topic and for their encouragement.

\end{document}